\title[Hall algebras for quivers with potential]{Categorical and K-theoretic Hall algebras for quivers with potential}
\author{Tudor P\u adurariu}
\address{Department of Mathematics, Columbia University, 
2990 Broadway, New York, NY 10027}
\email{tgp2109@columbia.edu}
\date{}
\jot \setlength{\topmargin}{0.1\topmargin}
\newtheorem{thm}{Theorem}[section]
\newtheorem{cor}[thm]{Corollary}
\newtheorem{prop}[thm]{Proposition}
\theoremstyle{definition}
\newtheorem{defn}[thm]{Definition}
\newtheorem{thm*}[thm]{Theorem$^*$}
\newtheorem{conjecture}[thm]{Conjecture}
\newcommand{\comment}[1]{}
\renewcommand{\leq}{\leqslant}
\renewcommand{\geq}{\geqslant}
\newcommand{\F}{\mathcal{F}}
\newcommand{\A}{\mathcal A}
\newcommand{\Z}{\mathbb{Z}}
\newcommand{\X}{\mathcal{X}}
\newcommand{\C}{\mathbb{C}}
\newcommand{\ee}{\underline{e}}
\newcommand{\dd}{\underline{d}}
\begin{document}
\maketitle

\begin{abstract}
   Given a quiver with potential $(Q,W)$, Kontsevich--Soibelman constructed a Hall algebra on the critical cohomology of the stack of representations of $(Q,W)$. Special cases of this construction are related to work of Nakajima, Varagnolo, Schiffmann--Vasserot, Maulik--Okounkov, Yang--Zhao etc. about geometric constructions of Yangians and their representations; indeed, given a quiver $Q$, there exists an associated pair $\left(\widetilde{Q},\widetilde{W}\right)$ whose CoHA is conjecturally the positive half of the Maulik--Okounkov Yangian $Y_{\text{MO}}(\mathfrak{g}_Q)$. 

For a quiver with potential $(Q,W)$, we follow a suggestion of Kontsevich--Soibelman and study a categorification of the above algebra constructed using categories of singularities. Its Grothendieck group is a K-theoretic Hall algebra (KHA) for quivers with potential. We construct representations using framed quivers and we prove a wall-crossing theorem for KHAs.
We expect the KHA for $\left(\widetilde{Q},\widetilde{W}\right)$ to recover the positive part of quantum affine algebra $U_q\left(\widehat{\mathfrak{g}_Q}\right)$ defined by Okounkov--Smirnov.
\end{abstract}

\section{Introduction}

\subsection{Quivers with potential.}
Let $Y$ be a Calabi-Yau $3$-fold and let $\beta\in H^{4}(Y,\mathbb{Z})\oplus H^{6}(Y,\mathbb{Z})$.
The Donaldson--Thomas (DT) invariants of $Y$ are virtual counts of curves on $Y$ of support $\beta$ and they are defined using the geometry of 
the moduli stack $\mathfrak{M}_\beta$ of sheaves with compact support $\beta$.
One can define DT invariants for quivers with potential $(Q,W)$ using the vanishing cycle sheaf $\varphi_{\text{Tr}\,(W)}\mathbb{Q}$ of the regular function 
$$\text{Tr}\,(W):\X(d)\to\mathbb{A}^1_{\mathbb{C}}$$
on the stack $\X(d)$ of representation of $Q$ of a given dimension $d$.

For any Calabi-Yau $3$-fold $Y,$ the stack $\mathfrak{M}_\beta$ is locally analytically described by $\text{crit}\,(\text{Tr}\,(W))\subset \X(d)$ for a quiver with potential $(Q,W)$ \cite{j}, \cite{t} and the DT invariants are defined using the sheaves $\varphi_{\text{Tr}\,(W)}\mathbb{Q}$.
This description is global for $Y=\mathbb{A}^3_{\mathbb{C}}$ and $\beta=d\in\mathbb{N}$. Consider the quiver $Q_3$ with one vertex and three loops $x,y,$ and $z$ and potential $W_3=xyz-xzy$. Then \[\mathfrak{M}_\beta\cong\text{crit}\,(\text{Tr}\,(W_3))\subset \X(d).\]
It is thus worthwhile to study the DT theory of quivers with potential and try to generalize the constructions and results to the general case of a Calabi-Yau $3$-fold.

\subsection{Cohomological Hall algebras.} \label{cohanot}

Let $I$ be the set of vertices of $Q$,
let $\theta\in \mathbb{Q}^{I}$ be a generic King stability condition, let $\mu\in \mathbb{Q}$ a slope, let $\Lambda_\mu\subset \mathbb{N}^{I}$ be the subset of dimension vectors of slope $\mu$, and let $d\in \Lambda_\mu$. 
Denote
by \[\X(d)^{\text{ss}}\subset \X(d):=R(d)/G(d)\] the stack of $\theta$-semistable representations of slope $\mu$. 
The Cohomological Hall algebra (CoHA), constructed by Kontsevich--Soibelman \cite{ks}, is an algebra with underlying $\Lambda_\mu$-graded
vector space:
$$\text{CoHA}\,(Q, W)_{\mu}:=\bigoplus_{d\in \Lambda_\mu}H^{\cdot}\left(\X(d)^{\text{ss}}, \varphi_{\text{Tr}\,W}\mathbb{Q}\right),$$ where the multiplication $m=p_*q^*$ is defined using the maps \begin{equation}\label{pq}
    \X(d)^{\text{ss}}\times \X(e)^{\text{ss}}\xleftarrow{q} \X(d,e)^{\text{ss}}\xrightarrow{p} \X(d+e)^{\text{ss}}
    \end{equation} from the stack $\X(d,e)$ parametrizing pairs of representations $A\subset B$ with $A$ of dimension $d$ and $B$ of dimension $d+e$. 
Consider the regular function \[\text{Tr}(W):\X(d)\to\mathbb{A}^1_{\mathbb{C}}.\] Assume zero is its only critical value. The critical locus of $\text{Tr}(W)$ is $\X(Q,W,d)$, the moduli of representations of dimension $d$ of the Jacobi algebra $$\text{Jac}(Q,W):=\mathbb{C} Q\Big\slash\left(\frac{\partial W}{\partial e}, e\in E\right),$$ so the vector space $H^{\cdot}(\X(d),\varphi_{\text{Tr}\,W}\mathbb{Q})$ is the critical cohomology of the (usually singular) space $\X(Q,W,d)$ with coefficients in (a shift of) a perverse sheaf.
Using framed quivers, Davison--Meinhardt \cite{dm} and Soibelman \cite{s} constructed representations of these algebras.

For any quiver $Q$, there is a tripled quiver with potential $\left(\widetilde{Q}, \widetilde{W}\right)$ whose CoHA recovers the preprojective Hall algebra of $Q$ as defined by Schiffmann--Vasserot \cite{sv2}, Yang--Zhao \cite{yz}, see \cite{rs}.
In \cite{d}, Davison conjectured that an $\mathbb{C}^*$-equivariant version of $\text{CoHA}\left(\widetilde{Q}, \widetilde{W}\right)$ is the positive half of the Maulik--Okounkov Yangian $Y_{\text{MO}}$.

%For a general quiver $Q$, the algebra $\text{CoHA}(\widetilde{Q},\widetilde{W})$ acts on the Borel-Moore homology of Nakajima quiver varieties, which are certain symplectic varieties associated to a quiver $Q$ generalizing $\text{Hilb}\,(\C^2,n)$. Maulik-Okounkov \cite{mo} constructed a Yangian $Y(\mathfrak{g}_{Q})$ that acts on the (quantum, equivariant) cohomology of Nakajima quiver varieties, and Davison \cite{d} conjectured that $\text{CoHA}\,(\widetilde{Q},\widetilde{W})$ is the positive half of $Y(\mathfrak{g}_{Q})$ \cite{d}. 

\subsection{Categorical and K-theoretic Hall algebras.} 
In \cite[Section 8.1]{ks}, Kontsevich--Soibelman propose the category of singularities 
\[D_{\text{sg}}(\X(d)_0):=D^b\text{Coh}(\X(d)_0)\slash \text{Perf}(\X(d)_0)\]
as a categorification of the critical cohomology $H^{\cdot}(\X(d), \varphi_{\text{Tr}\,W}\mathbb{Q})$, see Efimov's work \cite{e}. 
The category of singularities is equivalent to the category of matrix factorizations $\text{MF}(\X(d), W)$ for the regular function $\text{Tr}\,W$. 

Consider the torus $\left(\mathbb{C}^*\right)^E$ whose factor corresponding to $e\in E$ acts on $R(d)$ by scaling the linear map corresponding to $e$. Let $T\subset (\mathbb{C}^*)^{E}$ be a torus under which $W$ is invariant. 
We use the notations introduced in Subsection \ref{cohanot}.

\begin{thm}\label{thm:1}
Consider the $\Lambda_\mu$-graded category
$$\text{HA}_T(Q,W)_{\mu}:=\bigoplus_{d\in\Lambda_\mu} D_{\text{sg}, T}\left(\X(d)^{\text{ss}}_0\right).$$
Then $\text{HA}_T(Q,W)_{\mu}$ is monoidal with multiplication $m:=p_*q^*$, where $p$ and $q$ are the maps in \eqref{pq}. The underlying category is called the categorical Hall algebra (HA) of $(Q,W)$. The Grothendieck group is called the K-theoretic Hall algebra (KHA) of $(Q,W)$. 
\end{thm}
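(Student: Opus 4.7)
The plan is to mirror the Kontsevich--Soibelman CoHA construction at the level of singularity categories, defining the product as $m = p_\ast q^\ast$ along the Hecke correspondence \eqref{pq}. The proof breaks into three steps: a potential-compatibility identity, well-definedness of the two functors, and associativity. The first step is the identity
\[p^\ast \operatorname{Tr}(W)_{d+e} \;=\; q^\ast\bigl(\operatorname{Tr}(W)_d \boxplus \operatorname{Tr}(W)_e\bigr)\]
on $\X(d,e)$, which holds because for any cyclic word in the arrows of $Q$ the trace of a product of block upper-triangular matrices equals the sum of the traces of the diagonal blocks. This restricts \eqref{pq} to the zero-fibers and lets one transport matrix factorizations of $W_d \boxplus W_e$ to matrix factorizations of $W_{d+e}$.

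Second, I would check that $q^\ast$ and $p_\ast$ descend to singularity categories. The map $q$ is smooth: on underlying prequotients, $R(d,e) \to R(d) \times R(e)$ is a linear projection with vector-space fiber (the strict upper-triangular block of arrows), and $G(d,e) \to G(d) \times G(e)$ has unipotent kernel, so $q$ is an affine bundle of quotient stacks; hence $q^\ast$ preserves perfect complexes and passes to $D_{\text{sg},T}$. The map $p$ factors as a closed embedding into a partial-flag Grassmannian bundle over $\X(d+e)$, so it is proper of finite Tor-dimension, and $p_\ast$ likewise preserves perfectness and descends. Combined with the potential identity, this yields a well-defined functor $m$ on each graded piece; compatibility with $\theta$-semistability uses the standard fact that in a short exact sequence of objects of equal slope the middle is semistable iff the extremes are.

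Third, the unit is the generator of $D_{\text{sg},T}(\X(0)_0^{\text{ss}})$, and associativity is proved by introducing the three-step flag stack $\X(d_1,d_2,d_3)^{\text{ss}}$ parametrizing filtrations $0 \subset A_1 \subset A_2 \subset B$ of prescribed subquotient dimensions. Both iterated compositions $m \circ (m \boxtimes \operatorname{id})$ and $m \circ (\operatorname{id} \boxtimes m)$ should be identified with the single pullback-pushforward $p'_\ast q'^\ast$ along the total correspondence
\[\X(d_1)^{\text{ss}} \times \X(d_2)^{\text{ss}} \times \X(d_3)^{\text{ss}} \leftarrow \X(d_1,d_2,d_3)^{\text{ss}} \to \X(d_1+d_2+d_3)^{\text{ss}}\]
via base change in the two natural squares obtained by forgetting an inner step of the flag.

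The main obstacle will be making this base change rigorous at the level of singularity categories: the two squares are only derived cartesian, so one must either work with matrix factorizations on derived stacks or verify a Tor-independence condition ensuring classical flat base change, and then check that the resulting isomorphism is compatible with the potential. Once that is settled, associativity is a formal consequence of the properness of $p$ and smoothness of $q$, together with the projection formula for $p_\ast$.
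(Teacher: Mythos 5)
Your proposal follows essentially the same route as the paper: the product is $p_*q^*$ precomposed with the exterior (Thom--Sebastiani) product, well-definedness comes from $q$ being an affine bundle and $p$ proper, and associativity is the base-change argument on the two cartesian squares built from the three-step flag stack $\X(d,e,f)^{\text{ss}}$. Your worry about the squares being only derived cartesian dissolves because the maps of type $q$ (and the $T$-diagonal maps) are flat affine bundles, so the squares are classically cartesian and Tor-independent and proper base change applies directly, exactly as in the paper's proof.
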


In analogy to cohomology, we may call $K_0\left(D_{\text{sg}}\left(\X(d)_0\right)\right)$ the critical K-theory of $\X(Q,W,d)$ and may denote it by $K_{\text{crit}}\left(\X(Q,W,d)\right)$. We denote the full Hall algebra (for the zero stability condition) by $\text{HA}_T(Q,W)$ and $\text{KHA}_T(Q,W)$.

Assume that there is a $\mathbb{C}^*\subset \left(\mathbb{C}^*\right)^E$ such that $\text{Tr}\,W_d$ has weight $2$ with respect to $\mathbb{C}^*$ for any $d\in\mathbb{N}^I$. Then one can consider graded matrix factorization categories $\text{MF}^{\text{gr}}(\X(d), W)$ which are equivalent to categories of graded singularities $D_{\text{sg}}^{\text{gr}}\left(\X(d)^{\text{ss}}_0\right)$. One can define a graded version of HA with underlying
$\Lambda_\mu$-graded category
$$\text{HA}^{\text{gr}}_T(Q,W)_{\mu}:=\bigoplus_{d\in\Lambda_\mu} D^{\text{gr}}_{\text{sg}, T}\left(\X(d)^{\text{ss}}_0\right)$$
and multiplication $m:=p_*q^*$. Its Grothendieck group is called $\text{KHA}^{\text{gr}}$.

\subsection{Preprojective Hall algebras}
Let $Q$ be a quiver and consider the tripled quiver $\left(\widetilde{Q}, \widetilde{W}\right)$. 
Consider the preprojective Hall algebra defined by Varagnolo--Vasserot:
\[\text{HA}_T(Q):=\bigoplus_{d\in\mathbb{N}^I} D^b_T\left(\mathfrak{P}(d)\right),\]
where $\mathfrak{P}(d)$ is the stack of representations of dimension $d$ of the preprojective algebra of $Q$. 
Using Isik's equivalence \cite{i}, there is an equivalence of underlying categories
\[\text{HA}^{\text{gr}}_T\left(\widetilde{Q}, \widetilde{W}\right)\cong \text{HA}_T(Q),\]
where $\text{HA}^{\text{gr}}$ is defined using
 a natural $\mathbb{C}^*\subset \left(\mathbb{C}^*\right)^{\widetilde{E}}$ such that $\text{Tr}\,\widetilde{W}_d$ is homogeneous of weight $2$ for any $d\in\mathbb{N}^I$.
The multiplications differ by conjugation by an equivariant parameter, see \cite{VV} and Subsection \ref{comparisontwoKHAs}. Using \cite[Corollary 3.13]{T2}, the categories 
$\text{MF}_T(\X(d), W)$ and $\text{MF}^{\text{gr}}_T(\X(d), W)$ have the same Grothendieck group, so 
there is an isomorphism \begin{equation}\label{isogra}
    \text{KHA}^{\text{gr}}_T\left(\widetilde{Q}, \widetilde{W}\right)\cong \text{KHA}_T\left(\widetilde{Q}, \widetilde{W}\right).\end{equation}

\subsection{Representations of the KHA}\label{rKHA} There are representations of $\text{KHA}_T(Q,W)_{\mu}$ on critical $K$-theory spaces associated to moduli of framed representations
$$\bigoplus_{d\in\Lambda_\mu} K_0^T\left(D_{\text{sg}}\left(\X(f,d)^{\text{ss}}_0\right)\right).$$ There are analogous representations of $\text{KHA}^{\text{gr}}_T(Q,W)_{\mu}$.
These representations are analogous to the ones constructed in \cite{dm}, \cite{s} in cohomology.

Further, for a quiver $Q$, there are representations of $\text{KHA}_T^{\text{gr}}\left(\widetilde{Q},\widetilde{W}\right)\cong \text{KHA}_T(Q)$ on the $K$-theory of Nakajima quiver varieties
$$\bigoplus_{d\in\mathbb{N}^I} K_0^T\left(N(f,d)\right).$$

Quantum affine algebras also naturally act on Nakajima quiver varieties \cite{n}. Analogous to Davison's conjecture \cite{d}, 
we expect preprojective KHAs to be related to positive parts 
of quantum affine algebras.

\begin{conjecture}\label{conjecture}
\textit{Consider the torus $\mathbb{C}^*$ scaling the linear maps corresponding to edges of the doubled quiver $Q^d$ with weight $1$ and scaling the linear maps corresponding to
edges of the loops $\omega_i$ with weight $-2$, see Subsection \ref{tripledef} for the definitions of the doubled quiver $Q^d$ and of the
tripled quiver $\widetilde{Q}$. After possibly tensoring with $\mathbb{C}(q)\cong \left(\text{Frac}\,K_0(B\mathbb{C}^*)\right)\otimes\mathbb{C}$,
there is an isomorphism $$\text{KHA}_{\C^*}\left(Q\right)\cong U^{>}_q\left(\widehat{\mathfrak{g}_Q}\right),$$  where the right hand side is the positive part of Okounkov--Smirnov affine quantum algebra.}
\end{conjecture}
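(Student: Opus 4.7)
Since the statement is a conjecture, my aim is to outline a plausible strategy rather than a complete argument. The natural starting point is to use Isik's equivalence and the isomorphism \eqref{isogra} to translate the problem into a question about $\text{KHA}_{\C^*}\left(\widetilde{Q}, \widetilde{W}\right)$, and then to compare this algebra with $U^{>}_q\left(\widehat{\mathfrak{g}_Q}\right)$ through the joint action on the tower of Nakajima quiver varieties $\bigoplus_{f,d} K_0^{\C^*}(N(f,d))$. The left hand side acts via the representations constructed in Subsection~\ref{rKHA}, while the right hand side acts via Okounkov--Smirnov stable envelopes, so any candidate isomorphism can be tested on this common module.

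The core of the argument would then proceed in three steps. First, I would use the wall-crossing theorem announced in the abstract to establish a PBW-type decomposition of $\text{KHA}_{\C^*}\left(\widetilde{Q}, \widetilde{W}\right)$, parallel to Davison's cohomological integrality conjecture, and a resulting shuffle-algebra embedding of the KHA into a localized $K$-theoretic product indexed by $T$-fixed tori in $\X(d)^{\text{ss}}$. Second, matching the degree-one components and the rank-two structure constants with Negut--Okounkov--Smirnov type shuffle formulas should produce a $\mathbb{C}(q)$-algebra homomorphism $\Phi: U^{>}_q\left(\widehat{\mathfrak{g}_Q}\right) \to \text{KHA}_{\C^*}(Q)\otimes\mathbb{C}(q)$; verifying the Drinfeld and quantum Serre relations for $\Phi$ should reduce to a finite calculation in low dimension vectors $d = e_i + e_j$. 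Third, one would show that both actions on the Nakajima tower coincide on the degree-one generators (which on both sides are given by simple Hecke correspondences), and then use faithfulness of each action on a sufficiently rich family of framings $f$ to upgrade $\Phi$ to an isomorphism.

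The main obstacle is the first step: critical $K$-theory has no direct analogue of the vanishing cycle perverse filtration, so Davison's cohomological integrality does not transfer mechanically. One would likely have to proceed via dimensional reduction through Isik's theorem to a statement about $D^b_T(\mathfrak{P}(d))$, and then use a Koszul-type or explicit shuffle argument to isolate a ``BPS $K$-theory'' subspace that generates $\text{KHA}_{\C^*}(Q)$. A parallel difficulty on the quantum-algebra side is to have the Okounkov--Smirnov algebra available in an explicit Drinfeld-style presentation compatible with this shuffle picture; these two missing inputs are precisely why the statement is recorded as a conjecture rather than a theorem, and any complete proof will need to develop them in tandem.
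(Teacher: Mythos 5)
The statement you are addressing is recorded in the paper as Conjecture \ref{conjecture}; the paper contains no proof of it, and only notes that it is known for finite and affine type quivers (except $A_1^{(1)}$) by work of Varagnolo--Vasserot \cite{VV}. So there is no argument of the paper to compare yours against, and your text, as you yourself say, is a strategy sketch rather than a proof. Parts of it do align with the route the paper gestures at: the identification $\text{KHA}_{\C^*}(Q)\cong \text{KHA}^{\text{gr}}_{\C^*}\left(\widetilde{Q},\widetilde{W}\right)\cong \text{KHA}_{\C^*}\left(\widetilde{Q},\widetilde{W}\right)$ via Isik's theorem and \eqref{isogra}, and then the algebra map to the shuffle algebra $\text{KHA}_{\C^*}\left(\widetilde{Q},0\right)$ coming from Proposition \ref{defo} (the remark following it), which together with generation results is exactly how the finite and affine cases are checked in \cite{VV}. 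Your proposal to compare actions on the Nakajima tower via the representations of Subsection \ref{rKHA} and stable envelopes is also the expected bridge to the Okounkov--Smirnov side.

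However, several of your intermediate steps are themselves open problems, so the outline does not close the gap. First, Theorem \ref{wallcrossing} only gives, under Assumption C, a tensor decomposition of the full KHA into slope subalgebras for a chosen stability condition; it is not a PBW or integrality statement and does not by itself produce a ``BPS $K$-theory'' or a shuffle embedding --- the shuffle map of Proposition \ref{defo} is not shown to be injective in general, and the PBW theorem the paper invokes is proved elsewhere (\cite{P}, \cite{P2}) and only for symmetric quivers. Second, your step of ``matching generators and verifying Drinfeld and quantum Serre relations in dimension vectors $e_i+e_j$'' presupposes an explicit Drinfeld-style presentation of $U^{>}_q\left(\widehat{\mathfrak{g}_Q}\right)$; for general $Q$ the Okounkov--Smirnov algebra \cite{os} is defined through the $R$-matrix formalism, $\mathfrak{g}_Q$ is strictly larger than the Kac--Moody algebra of $Q$ (as the paper stresses), and no such presentation is available, so the relation check is not a finite low-rank computation. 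Third, the claim that the KHA action on $\bigoplus_{f,d}K_0^{\C^*}(N(f,d))$ is faithful for a sufficiently rich family of framings is unproven and is itself a serious obstacle (in cohomology the analogous faithfulness statements rest on the full weight of \cite{d}, \cite{dm} and the Maulik--Okounkov theory). Naming these as the missing inputs is fair, but be aware that they are precisely the content of the conjecture rather than technical lemmas one can expect to dispatch along the way.
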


The conjecture is true for finite and affine type quivers except $A_1^{(1)}$, see \cite{VV}. In these cases, $\mathfrak{g}_Q$ is the Kac-Moody algebra of $Q$, but for general quivers $Q$, the Lie algebra $\mathfrak{g}_Q$ is strictly larger than the Kac-Moody algebra of $Q$. 
%In Subsection \ref{6}, we briefly explain a computation for $Q$ a finite type $A$-quiver using shuffle algebras.  

\subsection{The Jordan quiver}
For the Jordan quiver $Q$
\begin{tikzcd}
1 \arrow[out=0,in=90,loop,swap,"f"]
\end{tikzcd}
with one vertex and one loop, the tripled quiver $\widetilde{Q}$
\begin{tikzcd}
1 \arrow[out=0,in=90,loop,swap,"x"]
  \arrow[out=120,in=210,loop,swap,"y"]
  \arrow[out=240,in=330,loop,swap,"z"]
\end{tikzcd}
has one vertex, three loops, and potential $\widetilde{W}=xyz-xzy.$ Let $T\subset \left(\mathbb{C}^*\right)^3$ be a torus which fixes $W$. Consider the action of $\mathbb{C}^*$ which scales the linear map corresponding to $z$ with weight $2$. We consider graded matrix factorization with respect to $\mathbb{C}^*$. 
By Isik's theorem \cite{i}, there is an equivalence
$$\text{HA}_T^{\text{gr}}\left(\widetilde{Q},\widetilde{W}\right)\cong
\bigoplus_{d\geq 0} D^b_T\left(\mathfrak{C}(d)\right),$$ where $\mathfrak{C}(d)$ is the stack of commuting matrices of dimension $d$ and the right hand side has an algebra structure defined by correspondences \cite{VV}, \cite{sv1}. 
The framed representations of the $\text{KHA}_T\left(\widetilde{Q},\widetilde{W}\right)$ from Subsection \ref{rKHA}
for the vector $f=1$ are 
$$\bigoplus_{d\geq 0} K_{\text{crit}}^T\left(\text{Hilb}\left(\mathbb{A}^3_{\C},d\right)\right).$$ Further, one constructs representations of $\text{KHA}_T\left(\widetilde{Q},\widetilde{W}\right)$ on the vector space
\begin{equation}\label{hilbert}
\bigoplus_{d\geq 0}K_0^T\left(\text{Hilb}\left(\mathbb{A}^2_{\C},d\right)\right).
\end{equation}
Schiffmann--Vasserot \cite{sv1} and Feigin--Tsymbaliuk \cite{ft} construct representations of $U_{q, t}\left(\widehat{\widehat{\mathfrak{gl}_1}}\right)$, 
the Drinfeld double of a subalgebra of $\text{KHA}_T\left(\widetilde{Q},\widetilde{W}\right)$, on the vector space \eqref{hilbert}.

\subsection{Wall-crossing}
The CoHA satisfies a wall-crossing theorem by work of Davison--Meinhardt \cite{dm}. We prove an analogous result for KHAs of quivers with potential which satisfy a Künneth-type assumption, see Subsection \ref{kunn}:
\[\text{KHA}(Q,W)\xrightarrow{\sim} \bigotimes_{\mu\in\mathbb{Q}} \text{KHA}(Q,W)_{\mu}.\]
It is an advantage that we can formulate an analogous categorical statement because by general principles it suffices to check the categorical statement for the zero potential. In this case, the statement follows from work of Halpern-Leistner \cite{hl}, Ballard--Favero--Katzarkov \cite{bfk} on
semiorthogonal decompositions in GIT.

\subsection{Further properties of the KHA} There is a PBW theorem for CoHAs \cite{dm} for all symmetric quivers $Q$ with potential. For KHAs, we expect such a theorem for $\left(\widetilde{Q}, \widetilde{W}\right)$ by Conjecture \ref{conjecture}. 
In \cite[Section 7]{P}, we prove a PBW theorem for KHAs for all pairs $(Q,W)$ with $Q$ symmetric. 

In \cite{P2}, inspired by explicit computations of KHAs due to Negu\c{t} \cite{ne1}, \cite{ne3}, we construct a Drinfeld double Hopf algebra of KHAs for a class of quivers with potential $(Q,W)$ satisfying a Künneth-type assumption. This class includes all tripled quivers $\left(\widetilde{Q}, \widetilde{W}\right)$.

\subsection{Acknowledgements} I would like to thank my PhD advisor Davesh Maulik for suggesting the problems discussed in the present paper and for his constant help and encouragement throughout the project. I would like to thank Ben Davison, Pavel Etingof, Daniel Halpern-Leistner, Andrei Negu\c{t}, Andrei Okounkov, Yukinobu Toda, and Eric Vasserot for useful conversations about the project.
I thank the referees for many useful comments.

\subsection{Outline of the paper} In Section \ref{2}, we review notions about quivers with potential, semiorthogonal decompositions, categories of singularities, and matrix factorizations. In Section \ref{3}, we prove Theorem \ref{thm:1} and discuss some examples of KHAs. In Section \ref{7}, we construct the representations of the KHA mentioned in Subsection \ref{rKHA}. In Section \ref{4}, we prove wall-crossing theorems for categorical and $K$-theoretic HAs.

\subsection{Notations and conventions} All the schemes and stacks considered are over $\mathbb{C}$. Let $X$ be a scheme or stack.
We denote by $D^b(X)$ the derived category of coherent sheaves, by $\text{Perf}(X)\subset D^b(X)$ its subcategory of perfect complexes, and by $D_{\text{sg}}(X)$ the category of singularities. All functors considered, such as pullback and pushforward, are derived.
We denote by $K_{i}(X)$ the $K$-theory of the category $\text{Perf}(X)$ and by $G_{i}(X)$ the $K$-theory of the category $D^b(X)$. For a regular immersion $\iota:\X\hookrightarrow \X'$, denote by $N_\iota$ the normal bundle of $\X$ in $\X'$. For the purposes of this paper, a smooth quotient stack will have the form
\begin{equation}\label{smqst}
    \mathcal{X}\cong [A/G],
\end{equation} where $A$ is a (quasi-)affine smooth variety and $G$ is a reductive group.

The categories considered are dg and we denote by $\otimes$ the product of dg categories \cite[Subsections 2.2 and 2.3]{K}.

We assume that the quivers with potential $(Q,W)$ considered are such that the regular functions $\text{Tr}\,W: \X(d)\to \mathbb{A}^1_{\mathbb{C}}$ have zero as the only critical value. 
We denote by $\text{MF}(\X(d), W)$ the category of matrix factorizations for the regular function $\text{Tr}\,W$. The zero fiber $\X(d)_0$ of $\text{Tr}\,W$ is derived.

\section{Background material}\label{2}

\subsection{Quivers with potential}\label{quivers}
Let $Q=(I, E, s, t)$ be a quiver with vertex set $I$, edge set $E$, and source and target maps $s, t: E\to I$. Let $d=\left(d^i\right)_{i\in I}\in \mathbb{N}^{I}$ be a dimension vector of $Q$. Consider vector spaces $V^i$ of dimension $d^i$.
Consider the reductive group $G(d)$ and its representation $R(d)$:
\begin{align*}
    G(d)&:=\prod_{I\in I} GL\left(V^i\right),\\
    R(d)&:=\prod_{e\in E} \text{Hom}\,\left(V^{s(e)}, V^{t(e)}\right).
\end{align*}
Define the quotient stack of representation of $Q$ of dimension $d$: \[\X(d):=R(d)\slash G(d).\]
A potential $W$ is a linear combination of cycles in $Q$. A potential determines a regular function: 
$$\text{Tr}\,(W):\X(d)\to\mathbb{A}^1_{\mathbb{C}}.$$ We will assume throughout the paper that $0$ is the only critical value. The critical locus of this function is the moduli of representations of the Jacobi algebra $\text{Jac}\,(Q,W):=\mathbb{C}Q\slash \mathcal{J}$, where $\mathbb{C}Q$ is the path algebra of $Q$ and $\mathcal{J}$ is the two-sided ideal in $\mathbb{C}Q$ generated by the derivatives $\frac{\partial W}{\partial e}$ of $W$ along all edges $e\in E$.

\subsection{King stability conditions}
Given a tuple $\theta=\left(\theta^i\right)_{i\in I}\in\mathbb{Q}^{I}$, we define the slope function on a dimension vector $d\in\mathbb{N}^{I}\setminus\{0\}$ by $$\tau(d):=\frac{\sum_{i\in I}\theta^id^i}{\sum_{i\in I}d^i}\in\mathbb{Q}.$$ 
For a slope $\mu\in \mathbb{Q}$, let $\Lambda_{\mu}\subset \mathbb{N}^{I}$ be the monoid of dimension vectors $d$ with $\tau(d)=\mu$ together with $d=0$.
Call a representation $V$ of $Q$ \textit{($\theta$-)(semi)stable} if for every proper subrepresentation $W\subset V$, we have that \[\tau(W)<(\leq) \tau(V).\] The locus of stable representations $R(d)^{\text{s}}$ and semistable representations $R(d)^{\text{ss}}$ inside $R(d)$ are open. We consider the moduli stack \[\X(d)^{\text{ss}}:=R(d)^{\text{ss}}/G(d)\] of semistable representations of dimension $d$.

\subsection{Moduli of framed representations}\label{framed}
Fix a framing vector $f\in \mathbb{N}^{I}$. We define a new quiver $Q^f=(I^f, E^f)$ with $I^f=I\sqcup \{\infty\}$ and $E^f$ contains $E$ and $f^i$ edges from $\infty$ to the vertex $i\in I$.
The dimension vector $d\in\mathbb{N}^{I}$ can be extended to a dimension vector for the new quiver \[\widetilde{d}:=(1,d)\in\mathbb{N}^{I^f}=\mathbb{N}\times \mathbb{N}^{I}.\]
Fix a slope $\mu\in\mathbb{Q}$. Define $\theta'=\mu+\varepsilon$ for a small positive rational number $\varepsilon>0$.
The stability condition $\theta$ is extended to a stability condition for the quiver $Q^f$:
\[\theta^f:=(\theta', \theta)\in \mathbb{Q}^{I^f}.\] 
%as follows. Fix a slope $\mu\in\mathbb{Q}$. Define $\theta'=\mu+\varepsilon$ for a small positive rational $\varepsilon>0$.Assume that $d\in\mathbb{N}^I$ has slope $\mu$. Then a $\widetilde{d}$-representation $V^f$ of $Q^f$ is $\theta^f$-semistable if and only if it is $\theta^f$-stable, and this holds if and only if the underlying $Q$-representation $V$ is semistable and for all proper $Q^f$-subrepresentations $U^f\subset V^f$, the underlying $Q$ subrepresentation $U\subset V$ has slope $\mu(U)<\mu=\mu(V)$.

\subsection{The tripled quiver}\label{tripledef}
The following construction was introduced by Ginzburg \cite{gi} and it is used in conjunction with dimensional reduction to obtain representations of a preprojective CoHA or KHA on the cohomology or $K$-theory of Nakajima quiver varieties.

Let $Q=(I,E)$ be a quiver. For an edge $e$, let $\overline{e}$ be the edge of opposite orientation. Let $\overline{E}:=\{\overline{e}|\,e\in E\}$.
The double quiver $Q^d=(I, E^d)$ has edge set $E^d:=E\cup \overline{E}$. For every $i\in I$, denote by $\omega_i$ a loop at $i$. The tripled quiver $\widetilde{Q}=\left(I, \widetilde{E}\right)$ has vertex set $I$ and $\widetilde{E}=E^d\sqcup \{\omega_i|\,i\in I\}.$ The potential $\widetilde{W}$ is defined by $$\widetilde{W}:=\sum_{e\in E} \omega_{s(e)}[\bar{e}, e].$$

\subsection{Nakajima quiver varieties}\label{nakquivvar}

Let $Q$ be a quiver, $d\in\mathbb{N}^I$ a dimension vector, $\theta\in\mathbb{Q}^I$ a stability condition, $\mu\in\mathbb{Q}$,
and $f\in\mathbb{N}^I$ a framing vector. Extend $\theta$ to the stability condition $\theta^f$ for $Q^f$ as in Subsection \ref{framed}. Associated to $\theta$, there is a character \[\chi_{\theta}:=\prod_{i\in I} \det(g^i)^{m\theta^i}: G(d)\to\mathbb{C}^*\] for $m$ a positive integer such that $m\theta^i$ are all integers.
The action of $G(d)\cong G(1,d)/\mathbb{C}^*$ on $R(1,d)$ induces a moment map:
$$\mu: T^*R(1,d)\to \mathfrak{g}(d)^{\vee}\cong \mathfrak{g}(d).$$
Define the Nakajima quiver variety $N(f,d)$ by the GIT quotient:
$$N(f,d):=\mu^{-1}(0)\sslash_{\chi_{\theta}}G(d).$$
There is also a description of Nakajima quiver varieties using the framed quiver in Subsection \ref{framed} given by Crawley-Boevey \cite[Section 1]{CB}. 

\subsection{Semiorthogonal decompositions.}\label{sodadjoint}
Let $\A$ be a triangulated category and let $\A_i\subset \A$ be full triangulated subcategories for $1\leq i\leq n$. We say that $\A$ has a semiorthogonal decomposition: $$\A=\langle \A_m,\cdots,\A_1\rangle$$ if
for every objects $A_i\in\mathcal{A}_i$ and $A_j\in\mathcal{A}_j$ and $i<j$ we have
$\text{RHom}\,(A_i,A_j)=0$, 
and the smallest full triangulated subcategory of $\mathcal{A}$ containing $\mathcal{A}_i$ for $1\leq i\leq m$ is $\mathcal{A}$.

Let $\mathcal{B}$ be a triangulated subcategory of $\mathcal{A}$. There exists a semiorthogonal decomposition $\A=\langle \mathcal{C}, \mathcal{B}\rangle$ if and only if the inclusion $\mathcal{B}\hookrightarrow \A$ has a right adjoint $\A\to \mathcal{B}$. If this happens, we say that $\mathcal{B}$ is \textit{right admissible} in $\mathcal{A}$.

\subsection{Window categories.}\label{window}
\subsubsection{} Let $\X=X/G$ be a quotient stack where $G$ be a reductive group and $X$ is a smooth affine variety with a $G$ action.

For pairs $(\lambda, Z)$ with $\lambda$ a cocharacter of $G$ and $Z$ a connected component of $X^\lambda$, consider the diagram \begin{equation}\label{pqq}
\mathcal{Z}:=Z/L\xleftarrow{q}\mathcal{S}:=S/P
\xrightarrow{p} \X,
\end{equation}
where $S\subset X$ is the subset of points $x$ such that $\lim_{z\to 0} \lambda(z)x\in Z$, and $L$ and $P$ are the Levi and parabolic groups corresponding to $\lambda$. The map $q$ is an affine bundle map and the map $p$ is proper.  We say that $(\lambda, Z)$ is a \textit{Kempf-Ness stratum} if the map $p$ is a closed immersion. The map $p$ is always a closed immersion if $G$ is abelian.

\subsubsection{}\label{KNstrata} Consider locally closed substacks $\mathcal{S}_i\subset \X$ indexed by $i\in I$ for $I$ a partially ordered set such that \[\mathcal{S}_i\subset \X\setminus \bigcup_{j<i}\mathcal{S}_j\] is a Kempf-Ness stratum. We denote by 
\begin{align*}
    \mathcal{U}&:=\bigcup_{i\in I}\mathcal{S}_i,\\
    \X^{\text{ss}}&:=\X\setminus\mathcal{U}.
\end{align*}
It might happen that $\X^{\text{ss}}$ is empty.
An example of such a stratification is given by the usual Kempf-Ness strata $\mathcal{S}_i$ for $i\in I$ and the semistable stack $\X^{\text{ss}}\subset \X$ with respect to a linearization $\mathcal{L}$ on $\X$. 

\subsubsection{} We continue with the notation from the previous Subsection. Halpern-Leistner \cite{hl} constructed categories $\mathbb{G}_w\subset D^b(\X)$ which are equivalent to $D^b\left(\X^{\text{ss}}\right)$ under the restriction map $j:\X^{\text{ss}}\hookrightarrow \X$, which we now explain.

%The semistable locus is obtained by removing unstable points. We next recall the criterion for determining the unstable locus. For any pair $(\lambda, Z)$, where $\lambda$ is a character of $G$ and $Z$ is a component of the fixed locus of $\lambda$, define: $$\text{inv}\,(\lambda, Z):=-\frac{\langle \lambda,\mathcal{L}|_Z\rangle}{|\lambda|},$$where $\langle \lambda, \mathcal{L}|_Z\rangle=\text{weight of }\lambda\text{ on }\mathcal{L}|_Z$ and $|\lambda|$ is a Weyl invariant quadratic form on the group of characters of $G$. Further, for any pair $(\lambda, Z)$ as above, consider the stacks and maps:

%If there are no pairs with $\text{inv}<0$, then there are no unstable points. If there are pairs with $\text{inv}<0$, choose the pair $(\lambda, Z)$ that maximises $\text{inv}(\lambda, Z)$. The map $p$ in \eqref{pqq} is a closed immersion.The image of $p$ in $S\times_P G/G\subset V/G$ is unstable and closed. We repeat the above process for its complement and continue until there are no pairs $(\lambda, Z)$ such that $\text{inv}(\lambda, Z)<0$.We obtain a list $(\mathcal{Z}_i, \lambda_i)$ for $i\in I$ with corresponding Kempf-Ness loci $(\mathcal{S}_i, p_i, q_i)$and \[\X^{\text{ss}}=\X-\bigcup_{i\in I}\mathcal{S}_i.\] For more details on the Kempf-Ness stratification, see \cite[Section 2.1]{hl}.

Let $i\in I$. Assume that $\mathcal{S}_i$ is an attracting locus for $(\lambda_i, Z_i)$.
Consider the inclusion map $j_i:Z_i\hookrightarrow X$. 
For $w\in \mathbb{Z}$, let $D^b(\mathcal{Z})_w$ be the subcategory of $D^b(\mathcal{Z})$ of complexes on which $\lambda_i$ acts with weight $w$.
 Define $n_i=\langle \lambda_i^{-1}, j_i^*\left(\det N_{p_i}\right)\rangle$, where $N_{p_i}$ is the normal bundle of $\mathcal{S}_i$ in $\X$.
 Choose $w_i\in \mathbb{Z}$ and define 
 $$\mathbb{G}_w:=\{F\in D^b(\X)\text{ such that } w_i\leq \langle \lambda_i, j_i^*F\rangle\leq w_i+n_i-1\}.$$
 In \cite[Theorem 2.10, Amplification 2.11]{hl}, Halpern-Leistner constructs a semiorthogonal decomposition: 
\begin{equation}\label{hl}
    D^b(\X)=\big\langle p_{i*}q_i^* D^b(\mathcal{Z}_i)_{v_i},\mathbb{G}_w,  p_{i*}q_i^* D^b(\mathcal{Z}_i)_{t_i}\big\rangle,
\end{equation}
 where the categories on the left hand side of $\mathbb{G}_w$ are after all $i\in I$ and all $v_i<w_i$ and the categories on the right hand side of $\mathbb{G}_w$ are after all $i\in I$ and all $t_i\geq w_i$. The functors $q_i^*$ and $p_{i*}$ are fully faithful on $D^b(\mathcal{Z}_i)_{v_i}$ and $q_i^*D^b(\mathcal{Z}_i)_{v_i}$, respectively, for $i\in I$ and $v_i$ as above.
The restriction functor 
$j^*:D^b(\X)\to D^b\left(\X^{\text{ss}}\right)$ induces an equivalence of categories: $$j^*:\mathbb{G}_w\xrightarrow{\sim} D^b\left(\X^{\text{ss}}\right).$$

\subsection{Categories of singularities and matrix factorizations.}\label{singul}
A reference for this section is \cite[Section 2.2]{T3}.
Let $Y$ be an affine scheme with an action of a reductive group $G$. Consider the quotient stack
$\mathcal{Y}=Y/G$.
The category of singularities of $\mathcal{Y}$ is a triangulated category defined as the quotient of triangulated categories
$$D_{\text{sg}}(\mathcal{Y}):=D^b(\mathcal{Y})/\text{Perf}(\mathcal{Y}),$$
where $\text{Perf}(\mathcal{Y})\subset D^b(\mathcal{Y})$ is the full subcategory of perfect complexes. 
If $\mathcal{Y}$ is smooth, the category of singularities is trivial. We have an exact sequence
\[K_0(\mathcal{Y})\to G_0(\mathcal{Y})\to K_0\left(D_{\text{sg}}(\mathcal{Y})\right)\to 0.\]

Let $\X=X/G$ be a smooth quotient stack with $X$ an affine scheme and consider
a regular function 
\[f:\mathcal{X}\to\mathbb{A}^1_{\mathbb{C}}.\]
Consider the category of matrix factorizations $\text{MF}(\X, f)$. It has objects $(\mathbb{Z}/2\mathbb{Z})\times G$-equivariant factorizations $(P, d_P)$, where $P$ is a $G$-equivariant coherent sheaf, $\langle 1\rangle$ is the twist corresponding to a non-trivial $\mathbb{Z}/2\mathbb{Z}$-character on $X$, and \[d_P: P\to P\langle 1\rangle\] with $d_P\circ d_P=f$. Alternatively, the objects of $\text{MF}(\X, f)$ are tuplets
\[(F, G, \alpha: F\to G, \beta: G\to F),\]
where $F$ and $G$ are $ G$-equivariant coherent sheaves, $\alpha$ and $\beta$ are $G$-equivariant morphisms
with $\alpha\circ\beta$ and $\beta\circ\alpha$ are multiplication by $f$. 
By a theorem of Orlov \cite{o2},  there is an equivalence 
\[D_{\text{sg}}(\X_0)\cong \text{MF}(\X, f).\]
Recall that for $f=0$, the fiber $\X_0$ is derived.
We will freely switch between $D_{\text{sg}}$ and $\text{MF}$ throughout this paper.

For a triangulated subcategory $\mathcal{A}$ of $D^b(\X)$, define $\text{MF}(\mathcal{A}, f)$ as the full subcategory of $\text{MF}(\mathcal{X}, f)$ with objects pairs $(P, d_P)$ with $P$ in $\mathcal{A}$. We explain next that semiorthogonal decompositions for the ambient smooth stack induce semiorthogonal decompositions for matrix factorizations, see also \cite[Lemma 1.18]{hlp}.

\begin{prop}\label{prop1}
Let $I$ be a totally ordered set and consider a semiorthogonal decomposition
\[D^b(\mathcal{X})=\big\langle \mathcal{A}_i\big\rangle_{i\in I}.\]
There is a semiorthogonal decomposition
\[\text{MF}(\mathcal{X}, f)=\big\langle \text{MF}(\mathcal{A}_i, f)\big\rangle_{i\in I}.\]
\end{prop}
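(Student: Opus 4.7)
The plan is to reduce by induction on $|I|$ to the case of a two-step SOD and then lift the decomposition through matrix factorizations by projecting the differential. One may thus assume $D^b(\X) = \langle \mathcal{B}, \mathcal{A}\rangle$ with $\text{RHom}(\mathcal{A}, \mathcal{B}) = 0$, so that $\mathcal{A}$ is right-admissible and every $P \in D^b(\X)$ sits in a functorial distinguished triangle $P_\mathcal{A} \to P \to P_\mathcal{B}$ with $P_\mathcal{A} \in \mathcal{A}$ and $P_\mathcal{B} \in \mathcal{B}$.

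I would first verify semiorthogonality of the subcategories $\text{MF}(\mathcal{A}, f)$ and $\text{MF}(\mathcal{B}, f)$ inside $\text{MF}(\X, f)$ by exploiting the dg enhancement of $\text{MF}$: the Hom complex between two matrix factorizations $(P, d_P)$ and $(Q, d_Q)$ is the $\mathbb{Z}/2\mathbb{Z}$-folding of $\text{RHom}_{D^b(\X)}(P, Q)$ equipped with the differential $\delta(\varphi) = d_Q \circ \varphi - (-1)^{|\varphi|}\varphi \circ d_P$. When $P \in \mathcal{A}$ and $Q \in \mathcal{B}$, the underlying complex already vanishes, so the RHom in $\text{MF}$ vanishes as well.

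For generation, I would take $(P, d_P) \in \text{MF}(\X, f)$ and apply the SOD of $D^b(\X)$ to $P$ to obtain the triangle $P_\mathcal{A} \xrightarrow{\iota} P \xrightarrow{\pi} P_\mathcal{B}$. The composition $\pi\langle 1\rangle \circ d_P \circ \iota : P_\mathcal{A} \to P_\mathcal{B}\langle 1\rangle$ lies in $\text{Hom}(P_\mathcal{A}, P_\mathcal{B}\langle 1\rangle)$, which vanishes by semiorthogonality. Hence $d_P \circ \iota$ factors through $\iota\langle 1\rangle : P_\mathcal{A}\langle 1\rangle \to P\langle 1\rangle$, yielding a lift $d_{P_\mathcal{A}} : P_\mathcal{A} \to P_\mathcal{A}\langle 1\rangle$; the symmetric construction produces $d_{P_\mathcal{B}}$ on $P_\mathcal{B}$. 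The original triangle then promotes to a distinguished triangle in $\text{MF}(\X, f)$ whose outer terms lie in $\text{MF}(\mathcal{A}, f)$ and $\text{MF}(\mathcal{B}, f)$, and iterating gives the desired SOD.

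The main obstacle will be verifying $d_{P_\mathcal{A}}^2 = f \cdot \id_{P_\mathcal{A}}$: one knows only that $\iota\langle 2\rangle \circ d_{P_\mathcal{A}}^2$ equals $\iota\langle 2\rangle \circ (f \cdot \id_{P_\mathcal{A}})$ as maps $P_\mathcal{A} \to P\langle 2\rangle$. To upgrade this to an equality in $\text{Hom}(P_\mathcal{A}, P_\mathcal{A}\langle 2\rangle)$, I would invoke uniqueness of the lift, which follows from $\text{Hom}(P_\mathcal{A}, P_\mathcal{B}\langle j\rangle) = 0$ for $j = 0, 1$ via the long exact sequence obtained by applying $\text{Hom}(P_\mathcal{A}, -)$ to the triangle $P_\mathcal{A}\langle 2\rangle \to P\langle 2\rangle \to P_\mathcal{B}\langle 2\rangle$. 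Once this compatibility is in place, the induction on $|I|$ closes the argument.
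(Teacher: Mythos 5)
Your proof is correct and takes essentially the same route as the paper: decompose the underlying object of a factorization using the semiorthogonal decomposition of $D^b(\X)$, use semiorthogonality to induce the differential on each piece, and conclude via the resulting triangle in $\text{MF}(\X,f)$. Your explicit verification that $d_{P_\mathcal{A}}^2=f\cdot\id$ by uniqueness of lifts, and the dg Hom-complex argument for orthogonality, are details the paper leaves implicit but are the intended justifications.
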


\begin{proof}
Assume for simplicity that the semiorthogonal decomposition is \[D^b(\mathcal{X})=\big\langle \mathcal{A}_1, \mathcal{A}_2\big\rangle.\]
Consider an object \[E=\left(\alpha: F\rightleftarrows G: \beta\right)\] in $\text{MF}(\mathcal{X}, f)$ and consider $F_i, G_i\in \mathcal{A}_i$ such that
\begin{align*}
    F_2\to F&\to F_1\xrightarrow{[1]}\\
    G_2\to G&\to G_1\xrightarrow{[1]}.
\end{align*}
The map $\alpha: F\to G$ induces a map $\alpha: F_2\to G$ and thus a map 
\[\alpha_2: F_2\to G_2\] because $\text{RHom}(F_2, G_1)=0$. Further, it induces a map $\alpha: F\to G_1$ and thus a map
\[\alpha_1: F_1\to G_1\] because $\text{RHom}(F_2, G_1)=0$. Similarly, there are induced maps $\beta_i: G_i\to F_i$ for $i=1,2$. The tuplets
\[E_i:=\left(\alpha_i: F_i\rightleftarrows G_i: \beta_i\right)\] are in $\text{MF}(\mathcal{A}_i, f)$. 
There is a distinguished triangle 
\[E_1\to E\to E_2\xrightarrow{[1]}.\]
The orthogonality claim is immediate.
%follows from  We have that $\text{RHom}(E_2, E_1)=0$ in $\text{MF}(\mathcal{X}, f)$ and there is a distinguished triangle \[E_1\to E\to E_2\xrightarrow{[1]}.\]
\end{proof}

We say that $f$ satisfies \textbf{Assumption A} if there is an extra action of $\mathbb{C}^*$ on $X$ which commutes with the action of $G$ such that $f$ is $\mathbb{C}^*$-equivariant of weight $2$. Denote by $(1)$ the twist by the character \[\text{pr}_2:G\times\mathbb{C}^*\to\mathbb{C}^*.\]
Consider the category of graded matrix factorizations $\text{MF}^{\text{gr}}(\mathcal{X}, f)$. It has objects pairs $(P, d_P)$ with $P$ an equivariant $G\times\mathbb{C}^*$-sheaf on $X$ and $d_P:P\to P(1)$ a $G\times\mathbb{C}^*$-equivariant morphism. For $f$ zero and the trivial $\mathbb{C}^*$-action on $\X$, we have that
\[\text{MF}^{\text{gr}}(\X,0)\cong D^b(\X),\]
see \cite[Remark 2.3.7]{T3}.
For a triangulated subcategory $\mathcal{B}$ of $D^b_{\mathbb{C}^*}(\X)$, define $\text{MF}^{\text{gr}}(\mathcal{B}, f)$ as the full subcategory of $\text{MF}^{\text{gr}}(\mathcal{X}, f)$ with objects pairs $(P, d_P)$ with $P$ in $\mathcal{B}$. The same argument used in Proposition \ref{prop1} shows that:

\begin{prop}\label{prop2}
Let $I$ be a totally ordered set and consider a semiorthogonal decomposition
\[D^b_{\mathbb{C}^*}(\mathcal{X})=\big\langle \mathcal{B}_i\big\rangle_{i\in I}.\]
There is a semiorthogonal decomposition
\[\text{MF}^{\text{gr}}(\mathcal{X}, f)=\big\langle \text{MF}^{\text{gr}}(\mathcal{B}_i, f)\big\rangle_{i\in I}.\]
\end{prop}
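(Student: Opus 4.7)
The plan is to follow the argument of Proposition \ref{prop1} verbatim, working equivariantly throughout. For simplicity I reduce to the two-term case $D^b_{\C^*}(\X)=\langle\mathcal{B}_1,\mathcal{B}_2\rangle$; the general case follows by induction on the length of the decomposition.

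First I take an object $E=\left(\alpha: F\rightleftarrows G: \beta\right)$ in $\text{MF}^{\text{gr}}(\X,f)$, so $F,G\in D^b_{\C^*}(\X)$ and $\alpha,\beta$ are $\C^*$-equivariant morphisms incorporating the internal twist $(1)$ in such a way that both compositions are multiplication by $f$. Applying the given semiorthogonal decomposition in $D^b_{\C^*}(\X)$ yields distinguished triangles
\[F_2\to F\to F_1\xrightarrow{[1]},\qquad G_2\to G\to G_1\xrightarrow{[1]},\]
with $F_i,G_i\in\mathcal{B}_i$.

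Next, exactly as in the proof of Proposition \ref{prop1}, the vanishing of $\text{RHom}(F_2,G_1)$ and of its twist by $(1)$ (both consequences of semiorthogonality, using that each $\mathcal{B}_i$ is closed under the twist $(1)$) produces unique $\C^*$-equivariant lifts $\alpha_i: F_i\to G_i$ and $\beta_i: G_i\to F_i$. Their compositions remain multiplication by $f$ on the pieces, so the tuples $E_i=(F_i,G_i,\alpha_i,\beta_i)$ lie in $\text{MF}^{\text{gr}}(\mathcal{B}_i,f)$ and fit into a distinguished triangle
\[E_1\to E\to E_2\xrightarrow{[1]}\]
in $\text{MF}^{\text{gr}}(\X,f)$.

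Finally, the semiorthogonality $\text{RHom}_{\text{MF}^{\text{gr}}}(E_2',E_1')=0$ for $E_i'\in\text{MF}^{\text{gr}}(\mathcal{B}_i,f)$ is immediate: the Hom-complex in $\text{MF}^{\text{gr}}$ is assembled from $\text{RHom}$'s in $D^b_{\C^*}(\X)$ of the underlying sheaves (possibly with internal twists), each of which vanishes by semiorthogonality of the $\mathcal{B}_i$. The only genuine extra point beyond Proposition \ref{prop1} is the twist-stability of the $\mathcal{B}_i$ under $(1)$; this is the main (mild) obstacle, but it holds in all the examples relevant to the paper, where the subcategories $\mathcal{B}_i$ arise from $\C^*$-invariant constructions.
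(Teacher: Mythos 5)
You follow exactly the route the paper intends -- its proof of this proposition is literally ``the same argument used in Proposition \ref{prop1}'' -- and your construction of the component triangles and the orthogonality check are that argument. The one place where the graded case genuinely differs from Proposition \ref{prop1} is the point you flag: the differential is a map $P\to P(1)$, and the Hom-complexes in $\text{MF}^{\text{gr}}$ involve all internal twists $(n)$ (the shift of $\text{MF}^{\text{gr}}$ mixes the cohomological shift with the twist), so both the descent of $d_P$ to the pieces $P_i$ and the semiorthogonality of the images require $\text{RHom}_{D^b_{\mathbb{C}^*}(\mathcal{X})}(P_2,P_1(n))=0$ for $n\neq 0$ as well, which does not follow from semiorthogonality of $\langle \mathcal{B}_i\rangle$ alone. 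You resolve this by assuming each $\mathcal{B}_i$ is closed under the twist $(1)$, but that is not a hypothesis of the proposition, and you only assert, rather than verify, that it holds in the cases of interest; so as a proof of the statement as written there is a gap -- one shared, to be fair, with the paper's own one-line proof.

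The extra hypothesis is genuinely needed and not an artifact of the method. Take $G$ trivial, $X=\mathbb{A}^1$ with the auxiliary $\mathbb{C}^*$ scaling $x$ with weight $1$ and $f=x^2$, and take the decomposition \eqref{hl} of $D^b_{\mathbb{C}^*}(\mathbb{A}^1)=D^b\left(\left[\mathbb{A}^1/\mathbb{C}^*\right]\right)$, whose summands are the single-twist categories $\langle \iota_*\mathcal{O}_0(v)\rangle$, $v\in\mathbb{Z}$ ($\iota$ the inclusion of the origin), together with one window $\langle \mathcal{O}(w)\rangle$; these are not closed under $(1)$. Each $(\mathcal{O}_0(v),0)$ is a nonzero object of $\text{MF}^{\text{gr}}\left(\mathbb{A}^1,x^2\right)\cong D^{\text{gr}}_{\text{sg}}\left(\text{Spec}\,\mathbb{C}[x]/(x^2)\right)$ lying in $\text{MF}^{\text{gr}}\left(\langle\iota_*\mathcal{O}_0(v)\rangle, x^2\right)$, and since the twist $(2)$ agrees with the shift $[2]$ in this category, $(\mathcal{O}_0(v),0)$ and $(\mathcal{O}_0(v+2),0)$ differ by a shift; hence these two subcategories admit nonzero morphisms in both directions and cannot be semiorthogonal in any order, so the claimed decomposition fails for this choice of $\mathcal{B}_i$. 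The fix is either to add twist-stability of the $\mathcal{B}_i$ (equivalently, $\text{RHom}(\mathcal{B}_i,\mathcal{B}_j(n))=0$ for all $n$ and the appropriate pairs $i,j$) as a hypothesis, or to verify it where the proposition is actually used (Corollary \ref{wccor}(b) via Proposition \ref{wallcros}): there the summands are defined by weight conditions for cocharacters of $G(d)$ and have the form $p_{\dd *}q_{\dd}^*(-)$, and since the twist $(1)$ is pulled back from $B\mathbb{C}^*$ it commutes with these constructions by the projection formula, so those subcategories are indeed closed under $(1)$ and your argument applies verbatim to them.
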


\subsection{Functoriality of categories of singularities.}\label{functo}
References for this subsection are \cite{pv}, \cite{T3}. Let $\mathcal{X}$, $\mathcal{X}'$ be smooth quotient stacks, see \eqref{smqst}, with a map $\alpha: \X'\to \X$. Let \[f:\mathcal{X}\to\mathbb{A}^1_{\mathbb{C}}\] be a regular function and consider $f':=f\alpha:\X'\to \mathbb{A}^1_{\mathbb{C}}.$ Assume that $0$ is the only critical value for each of $f$ and $f'$. There is a functor
\begin{align*}
    \alpha^*: \text{MF}(\X, f)&\to \text{MF}\left(\X', f'\right),\\
    (P, d_P)&\mapsto\left(\alpha^*P, \alpha^*d_P\right).
    \end{align*}
    If $\alpha$ is proper, there is a functor
\begin{align*}
\alpha_*: \text{MF}\left(\X', f'\right)&\to \text{MF}(\X, f),\\
\left(P', d_{P'}\right)&\mapsto\left(\alpha_*P', \alpha_*d_{P'}\right).
\end{align*}
Assume there are $\mathbb{C}^*$-actions on $\X'$ and $\X$ such that $\alpha$ is $\mathbb{C}^*$-equivariant and such that $\X$ and $\X'$ satisfy Assumption A with respect to the $\mathbb{C}^*$-action.
There are pullback and pushforward functors
\begin{align*}
    \alpha^*&: \text{MF}^{\text{gr}}(\X, f)\to \text{MF}^{\text{gr}}\left(\X', f'\right),\\
    \alpha_*&: \text{MF}^{\text{gr}}\left(\X', f'\right)\to \text{MF}^{\text{gr}}(\X, f).
    \end{align*}
There are also such functors for categories of singularities. These pullback and pushforward functors satisfy properties as those for derived categories, for example proper base change for cartesian diagrams.

\subsection{Thom-Sebastiani theorem}\label{TSiso}
Let $\X$ and $\mathcal{Y}$ be smooth quotient stacks with regular functions 
\begin{align*}
    f&:\X\to\mathbb{A}^1_{\mathbb{C}},\\
    g&:\mathcal{Y}\to\mathbb{A}^1_{\mathbb{C}},\\
    f+g&:\X\times\mathcal{Y}\to\mathbb{A}^1_{\mathbb{C}}.
\end{align*}
Consider the functors induced by exterior tensor product, see \cite[Definition 3.22]{BFK2}:
\begin{align*}
    \text{TS}&:\text{MF}^{\text{gr}}(\X, f)\otimes \text{MF}^{\text{gr}}(\mathcal{Y}, g)\to \text{MF}^{\text{gr}}(\X\times\mathcal{Y}, f+g),\\
    \text{TS}&:\text{MF}(\X, f)\otimes \text{MF}(\mathcal{Y}, g)\to \text{MF}(\X\times\mathcal{Y}, f+g).
\end{align*}
To define the first functor above, we assume that $\X$ and $\mathcal{Y}$ satisfy Assumption A and denote the corresponding tori by $\mathbb{C}^*_1$ and $\mathbb{C}^*_2$; the corresponding $\mathbb{C}^*$ on $\X\times\mathcal{Y}$ is the diagonal $\mathbb{C}^*\hookrightarrow \mathbb{C}^*_1\times \mathbb{C}^*_2$.

The Thom-Sebastiani theorem says that the first functor is an equivalence \cite[Section 3 and Section 5.1]{BFK2}, \cite[Section 2.5]{EP}:
\begin{equation}\label{thomseba}\text{MF}^{\text{gr}}(\X, f)\otimes \text{MF}^{\text{gr}}(\mathcal{Y}, g)\cong \text{MF}^{\text{gr}}(\X\times\mathcal{Y}, f+g).\end{equation}
There is also a version when a version of the second one is an equivalence \cite{Pr}. 
When using categories of singularities, the Thom-Sebastiani functor is induced by pushforward along $i:\X_0\times\mathcal{Y}_0\hookrightarrow(\X\times\mathcal{Y})_0$, see \cite[Theorem 4.13]{Pr}:
\[i_*: D_{\text{sg}}(\X_0)\otimes D_{\text{sg}}(\mathcal{Y}_0)\to D_{\text{sg}}\left((\X\times\mathcal{Y})_0\right).\]
%When there is an extra $\mathbb{C}^*$-action on $\X$ and $\Y$ such that $f$ and $g$ are both equivariant of weight $2$ and we consider graded matrix factorizations with respect to $\mathbb{C}^*$, there are similar equivalences\begin{equation}\label{thomseb}\text{MF}^{\text{gr}}(\X, f)\otimes \text{MF}^{\text{gr}}(\mathcal{Y}, g)\cong \text{MF}^{\text{gr}}(\X\times\mathcal{Y}, f+g).\end{equation}
There are maps
\begin{align*}
    \text{TS}&:K_0\left(\text{MF}^{\text{gr}}(\X, f)\right)\otimes K_0\left(\text{MF}^{\text{gr}}(\mathcal{Y}, g)\right)\to K_0\left(\text{MF}^{\text{gr}}(\X\times\mathcal{Y}, f+g)\right),\\
    \text{TS}&:K_0\left(\text{MF}(\X, f)\right)\otimes K_0\left(\text{MF}(\mathcal{Y}, g)\right)\to K_0\left(\text{MF}(\X\times\mathcal{Y}, f+g)\right).
\end{align*}
In general, these maps are not isomorphisms.

\subsection{Dimensional reduction}
\label{dimred0}
Let $X$ be a smooth affine scheme with an action of a reductive group $G$, let $\X=X/G$, and let $E$ be a $G$-equivariant vector bundle on $X$. Let $\mathbb{C}^*$ act on the fibers of $E$ with weight $2$ and consider $s\in \Gamma(X, E)$ a section of $E$ of $\mathbb{C}^*$-weight $2$.
It induces a map $\partial: E^{\vee}\to \mathcal{O}_X$. Consider the Köszul stack
\[\mathfrak{P}:=\text{Spec}\left(\mathcal{O}_X\left[E^{\vee}[1];\partial\right]\right)\big/G.\]
The section $s$ also induces the regular function \begin{equation}\label{defreg}
w:\mathcal{E}:=\text{Tot}_X\left(E^{\vee}\right)/G\to\mathbb{A}^1_\mathbb{C}
\end{equation}
defined by
$w(x,v)=\langle s(x), v \rangle$ for $x\in X(\mathbb{C})$ and $v\in E^{\vee}|_x$.
Consider the category of graded matrix factorizations $\text{MF}^{\text{gr}}\left(\mathcal{E}, w\right)$ with respect to the group $\mathbb{C}^*$ mentioned above. There is an equivalence of categories due to Isik \cite{i}, called dimensional reduction or the Köszul equivalence:
\begin{equation}\label{isik}
\text{MF}^{\text{gr}}\left(\mathcal{E}, w\right)\cong D^b(\mathfrak{P}).
\end{equation}
The analogous result in cohomology was proved by Davison \cite{d}.

\subsection{Localization theorems for categories of singularities.}\label{local}
We discuss some properties of categories of singularities on stacks which are used for computations in KHAs.
%see Proposition \label{def} and \cite{P2}. 

\begin{prop}\label{injBG}
Let $X$ be an affine scheme with an action of a reductive group $G$ and consider the stack $\X=X/G$. Let $B\subset G$ be a Borel subgroup with maximal torus $T$, and let $\mathcal{Y}:=X/B$ and $\mathcal{Z}=X/T$. There are natural maps $\tau:\mathcal{Z}\to\mathcal{Y}$ and $\pi:\mathcal{Y}\to\X$. Then 
\begin{align*}
\pi^*&: G_0(\X)\hookrightarrow G_0(\mathcal{Y}),\\
\pi^*&: K_0(\X)\hookrightarrow K_0(\mathcal{Y}),\\
\tau^*&: G_0(\mathcal{Y})\cong G_0(\mathcal{Z}),\\
\tau^*&: K_0(\mathcal{Y})\cong K_0(\mathcal{Z}).
\end{align*}
\end{prop}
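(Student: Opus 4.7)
The plan is to prove the two halves of the proposition separately; both reduce to standard facts in equivariant $K$-theory transferred to the quotient stacks $\X=X/G$, $\mathcal{Y}=X/B$, and $\mathcal{Z}=X/T$.

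For the injectivity of $\pi^*$, I would exhibit a one-sided inverse using $\pi_*$. The map $\pi:\mathcal{Y}\to\X$ is smooth and proper with fiber the flag variety $G/B$; more intrinsically, $\mathcal{Y}\cong(X\times G/B)/G$ where $G$ acts diagonally, so flat base change together with the standard cohomology vanishing $H^{>0}(G/B,\mathcal{O}_{G/B})=0$ (visible from the Bruhat cell decomposition) gives $\pi_*\mathcal{O}_{\mathcal{Y}}\cong\mathcal{O}_{\X}$. The projection formula then shows $\pi_*\pi^*\mathcal{F}\cong\mathcal{F}$ for $\mathcal{F}\in D^b(\X)$ or $\mathrm{Perf}(\X)$, and since $\pi_*$ preserves both $D^b$ and $\mathrm{Perf}$ (being smooth and proper), $\pi_*\circ\pi^*$ is the identity on $G_0(\X)$ and on $K_0(\X)$. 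Hence $\pi^*$ is injective in both cases.

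For the isomorphism $\tau^*$, I would show that $\tau:\mathcal{Z}\to\mathcal{Y}$ is an affine bundle and invoke homotopy invariance. Let $U$ denote the unipotent radical of $B$, so $B=T\ltimes U$ and $B/T\cong U\cong\mathbb{A}^N$ as varieties, where $N$ is the number of positive roots. Using the presentation $\mathcal{Z}=X\times^B(B/T)$, the map $\tau$ is identified with the Zariski-locally trivial $\mathbb{A}^N$-bundle over $\mathcal{Y}$ associated to the $B$-torsor $X\to X/B$. Homotopy invariance of $G$-theory and $K$-theory along affine bundles, valid in the stack setting by iterating the $\mathbb{A}^1$-case (devissage on $G$-theory and Bass--Thomason--Trobaugh for $K$-theory of perfect complexes), then yields isomorphisms $\tau^*:G_0(\mathcal{Y})\xrightarrow{\sim}G_0(\mathcal{Z})$ and $\tau^*:K_0(\mathcal{Y})\xrightarrow{\sim}K_0(\mathcal{Z})$.

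The only point requiring any verification is that the classical statements used, namely cohomology vanishing on $G/B$ and homotopy invariance along affine bundles, carry over to quotient stacks of the shape $X/H$ with $X$ affine and $H$ reductive; both are standard in equivariant $K$-theory. No deeper obstruction appears, and the proof amounts to an assemblage of classical ingredients.
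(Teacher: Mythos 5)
Your argument is correct and follows essentially the same route as the paper: for $\tau$ you use that $X/T\to X/B$ is an (iterated) affine bundle and homotopy invariance, and for $\pi$ you use $\pi_*\mathcal{O}_{\mathcal{Y}}\cong\mathcal{O}_{\X}$ plus the projection formula to get a one-sided inverse $\pi_*$ on $G_0$ and $K_0$. The paper phrases the second step as full faithfulness of $\pi^*$ and admissibility of $D^b(\X)$, $\mathrm{Perf}(\X)$ inside the corresponding categories on $\mathcal{Y}$, but the underlying mechanism (a retraction coming from $\pi_*\pi^*=\mathrm{id}$) is identical; your added justifications (vanishing of $H^{>0}(G/B,\mathcal{O}_{G/B})$, preservation of $\mathrm{Perf}$ under $\pi_*$) are details the paper leaves implicit.
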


\begin{proof}
The map $\tau$ is an affine bundle map, so $\tau^*$ is an isomorphism in $K$ and $G$-theory.

Next, we have that $\mathcal{Y}=\left(X\times_B G\right)/G$. The map $\pi^*$ is fully faithful by the projection formula and $\pi_*\mathcal{O}_{\mathcal{Y}}=\mathcal{O}_{\X}$. It has a right adjoint $\pi_*$. Thus the categories
\begin{align*}
    \pi^*&: D^b(\X)\to D^b(\mathcal{Y})\\
    \pi^*&: \text{Perf}(\X)\to \text{Perf}(\mathcal{Y})
\end{align*}
are admissible and the conclusion follows.
\end{proof}

Assume next that $\X=X/T$ for a vector space $X$ with an action of a torus $T$.
Consider a regular function \[f:\X\to\mathbb{A}^1_{\mathbb{C}}\] with $0$ the only critical value. Let $\lambda:\C^*\to T$ be a cocharacter and let $w\in\mathbb{Z}$. Define \[D^b(BT)_{\geq w}\subset D^b(BT)\] the subcategory of complexes on which $\lambda$ acts with weights $\geq w$. It induces a filtration $K_0(BT)_{\geq w}\subset K_0(BT)$. We denote its associated graded pieces by $\text{gr}_w K_0(BT)$.
Let $a: X^\lambda/T\hookrightarrow \X$. It induces a functor $a^*: D_{\text{sg}}(\X_0)\to D_{\text{sg}}\left(X^\lambda_0/T\right)$.
Let
\[D_{\text{sg}}(\X_0)_{\geq w}\subset D_{\text{sg}}(\X_0)\]
be the subcategory of complexes $\F$ such that $\lambda$ acts with weights $\geq w$ on $a^*(F)$. It induces a filtration 
\[K_0\left(D_{\text{sg}}(\X_0)\right)_{\geq w}\subset K_0\left(D_{\text{sg}}(\X_0)\right).\] We denote its associated graded pieces by $\text{gr}_w K_0\left(D_{\text{sg}}(\X_0)\right)$.
$K_0(BT)$ acts on $K_0\left(D_{\text{sg}}(\X_0)\right)$ via the tensor product and respects the above filtrations.

Next, assume that $\mathcal{E}$ is a vector bundle on $\X$ and consider the zero section $\iota:\X\hookrightarrow \mathcal{E}$. Define the Euler class $\text{eu}(\mathcal{E}):=\iota^*\iota_*(1)\in K_0(BT)\cong K_0(\X).$

\begin{prop}\label{zerodiv}
We are continuing in the above framework.
Assume that there exists $\lambda:\C^*\hookrightarrow T$ such that $\mathcal{E}^{\lambda}=\X$. 
Then the class $\text{eu}\,(\mathcal{E})$ is not a zero divisor in $K_0\left(D_{\text{sg}}(\X_0)\right).$
\end{prop}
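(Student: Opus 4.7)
The strategy is to use the $\lambda$-weight filtration on $K_0\left(D_{\text{sg}}(\X_0)\right)$ introduced just above the statement and to observe that, under the hypothesis $\mathcal{E}^{\lambda}=\X$, the lowest-$\lambda$-weight component of $\text{eu}(\mathcal{E})$ is a unit in $K_0(BT)$.

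First I would decompose the $T$-equivariant bundle into weight spaces as $\mathcal{E}\cong\bigoplus_{\mu}L_{\mu}^{\oplus n_\mu}$. The hypothesis $\mathcal{E}^{\lambda}=\X$ says $n_\mu=0$ whenever $\langle\lambda,\mu\rangle=0$. By the Koszul resolution of the zero section $\X\hookrightarrow\mathcal{E}$,
\[
\text{eu}(\mathcal{E})=\prod_\mu\bigl(1-[L_{-\mu}]\bigr)^{n_\mu}\in K_0(BT).
\]
Each factor $1-[L_{-\mu}]$ carries only two $\lambda$-weights, $0$ and $-\langle\lambda,\mu\rangle\ne 0$, so its lowest-$\lambda$-weight part is a single monomial ($1$ if $\langle\lambda,\mu\rangle<0$, and $-[L_{-\mu}]$ if $\langle\lambda,\mu\rangle>0$). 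Consequently the lowest-$\lambda$-weight component of $\text{eu}(\mathcal{E})$ is a single monomial $\pm[L_\nu]$, where $\nu=-\sum_{\langle\lambda,\mu\rangle>0}n_\mu\mu$. This monomial is a unit in $K_0(BT)$.

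Next I would pass to the associated graded for the $\lambda$-filtration. Since the filtration is defined through the monoidal pullback $a^*$, the $K_0(BT)$-action on $K_0\left(D_{\text{sg}}(\X_0)\right)$ respects it. On the associated graded $\text{gr}_\bullet K_0\left(D_{\text{sg}}(\X_0)\right)$, multiplication by $\text{eu}(\mathcal{E})$ reduces to multiplication by its leading monomial $\pm[L_\nu]$, which has multiplicative inverse $\mp[L_{-\nu}]$; the induced action is therefore an isomorphism of graded pieces, and in particular injective.

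To promote injectivity on the associated graded to injectivity on $K_0\left(D_{\text{sg}}(\X_0)\right)$ itself, I would verify separatedness of the filtration. For any $F\in D_{\text{sg}}(\X_0)$ the restriction $a^*F$ lies in $D_{\text{sg}}(X^\lambda_0/T)$, which splits as a direct sum over $\lambda$-weights (as $\lambda$ acts trivially on the base $X^\lambda$), so $a^*F$ has bounded $\lambda$-weights and the intersection $\bigcap_w K_0\left(D_{\text{sg}}(\X_0)\right)_{\geq w}$ is contained in $\ker(a^*)$. The main obstacle is then to control $\ker(a^*)$: I would apply $a^*$ to a hypothetical relation $\text{eu}(\mathcal{E})\cdot x=0$ and use the cleanly-graded structure on the fixed locus to force $a^*x=0$; combining this with the filtered argument, iterated on $\ker(a^*)$ if necessary, should give $x=0$.
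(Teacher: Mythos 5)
Your first two paragraphs are exactly the paper's proof: the paper writes $\text{eu}(\mathcal{E})=\prod_{\beta\in S}\left(1-q^{\beta}\right)$ with all $\langle\lambda,\beta\rangle\neq 0$, observes that the lowest $\lambda$-weight component is a single monomial $\pm q^{v}$, and concludes that multiplication by $\text{eu}(\mathcal{E})$ induces isomorphisms $\text{gr}_{w}K_0\left(D_{\text{sg}}(\X_0)\right)\to \text{gr}_{w+v}K_0\left(D_{\text{sg}}(\X_0)\right)$, hence is not a zero divisor. The only divergence is your third paragraph: the paper passes from the graded isomorphism to the conclusion without further comment, implicitly using that every nonzero class has a well-defined leading term, i.e.\ that the filtration is exhaustive (which your boundedness remark on the fixed locus does give, since the coordinate ring of $X^{\lambda}_0$ has $\lambda$-weight zero) and separated. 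Your attempt to supply the missing separatedness is the one place the sketch does not close: from $\text{eu}(\mathcal{E})\cdot x=0$ you can indeed deduce $a^*x=0$, because on $K_0\left(D_{\text{sg}}(X^{\lambda}_0/T)\right)$ the weight decomposition is an honest grading and the leading-monomial argument applies directly there; but $a^*$ need not be injective on $K_0\left(D_{\text{sg}}(\X_0)\right)$, and the proposed ``iteration on $\ker(a^*)$'' has no evident meaning, since membership in the filtration is defined object-wise while $\ker(a^*)$ is only a subgroup of classes, so $a^*x=0$ does not place $x$ deep in the filtration nor produce a smaller stack to restrict to. In sum, your proposal establishes exactly what the paper's proof establishes (the graded isomorphism, plus exhaustiveness), and honestly flags, without resolving, the separatedness point that the paper's own terse proof also leaves implicit.
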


\begin{proof}
Let $S$ be the set of weights of the normal bundle $N_{\iota}$. We have that
$$\text{eu}(\mathcal{E})=\prod_{\beta\in S}\left(1-q^{\beta}\right)\in K_0(B\C^*).$$ The hypothesis implies that $\langle \lambda, \beta\rangle$ is not zero for $\beta\in S$. Let $v$ be the smallest $\lambda$-weight of a monomial in $\text{eu}(\mathcal{E})$. Then 
\[\text{gr}_v\,\text{eu}(\mathcal{E})=\pm q^v\in \text{gr}_vK_0(BT).\]
Let $w\in\mathbb{Z}$. Multiplication by $\text{eu}(\mathcal{E})$ induces the multiplication by $\pm q^v$-map
\[\text{gr}_w  K_0\left(D_{\text{sg}}(\X_0)\right)\xrightarrow{\sim}
\text{gr}_{v+w} K_0\left(D_{\text{sg}}(\X_0)\right),\] so $\text{eu}(\mathcal{E})$ is not a zero divisor.
\end{proof}

For the next result, let $T$ be a torus, let $X$ be a representation of $T$, and let $Y\hookrightarrow X$ a $T$-equivariant affine subscheme. Denote by $S$ the set of weights $\beta$ of $T$ in $X/X^T$ and by $\mathcal{I}$ the set of functions $1-q^\beta$ with $\beta\in S$. Consider the stack $\X=X/T$. For $M$ a $K_0(BT)$-module, we denote by $M_{\mathcal{I}}$ the localization of $M$ at functions in $\mathcal{I}$.

\begin{thm}\label{loc}
Let $f:\X\to\mathbb{A}^1_{\mathbb{C}}$ be a regular function and let $\lambda:\C^*\to T$ be a cocharacter. Consider the attracting diagram for $\lambda$:
\[\mathcal{Z}:=X^\lambda/T\xleftarrow{q}\mathcal{S}:=X^{\lambda\geq 0}/T
\xrightarrow{p} \X.\] 
%Assume that $f|_{\mathcal{S}}=q^*\left(f|_{\mathcal{Z}}\right)$.
Let $\iota:\mathcal{Z}\hookrightarrow\X$ be the natural inclusion map.
There are isomorphisms
\begin{align*}
p_*q^*&: K_0\left(D_{\text{sg}}(\Z_0)\right)_{\mathcal{I}}\xrightarrow{\sim} K_0\left(D_{\text{sg}}(\mathcal{X}_0)\right)_{\mathcal{I}},\\
\iota_*&: K_0\left(D_{\text{sg}}(\mathcal{Z}_0)\right)_{\mathcal{I}}\xrightarrow{\sim} K_0\left(D_{\text{sg}}(\mathcal{X}_0)\right)_{\mathcal{I}},\\
\iota^*&:K_0\left(D_{\text{sg}}(\mathcal{X}_0)\right)_{\mathcal{I}}\xrightarrow{\sim} K_0\left(D_{\text{sg}}(\mathcal{Z}_0)\right)_{\mathcal{I}}.
\end{align*}
\end{thm}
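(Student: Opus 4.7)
The plan is to follow the Atiyah--Bott--Thomason equivariant K-theoretic localization strategy, transported to singularity categories via functoriality (Subsection~\ref{functo}) and the Halpern--Leistner window theorem (Subsection~\ref{window}). As a first reduction, I relate the three maps to each other. Since $\iota = p \circ s$ with $s: \mathcal{Z} \hookrightarrow \mathcal{S}$ the zero section of the $T$-equivariant vector bundle $q: \mathcal{S} \to \mathcal{Z}$ of fiber $X^{\lambda>0}$, the Koszul resolution of $s_*\mathcal{O}_\mathcal{Z}$ gives, via the projection formula,
\[\iota_*(-) \;=\; p_*q^*\bigl(\text{eu}(N_s)\otimes -\bigr) \quad \text{on } K_0(D_{\text{sg}}(\mathcal{Z}_0)),\]
where $\text{eu}(N_s)=\prod_{\langle\lambda,\beta\rangle>0}(1-q^\beta)$ involves only nonzero $\lambda$-weights of $X$; each such $\beta$ lies in the set $S$ defining $\mathcal{I}$. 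Since $q^*$ is an equivalence by affine bundle invariance and $\text{eu}(N_s)$ is inverted in the localization, the isomorphism claims for $\iota_*$ and $p_*q^*$ are equivalent; the one for $\iota^*$ then follows from the self-intersection computation below.

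The next step is split injectivity of $\iota_*$. Here $\iota$ is a $T$-equivariant regular closed immersion of smooth quotient stacks with normal bundle $N_\iota = X/X^\lambda$. Base change along the derived self-intersection $\mathcal{Z}\times^{\mathbf R}_\mathcal{X}\mathcal{Z}$, whose structure sheaf is the Koszul algebra on $N_\iota^\vee[1]$, combined with the functoriality of categories of singularities, yields the self-intersection formula $\iota^*\iota_*(-) \simeq \lambda_{-1}(N_\iota^\vee)\otimes -$. On $K_0$ this is multiplication by $\text{eu}(N_\iota) = \prod_\beta(1-q^\beta)$ indexed over the nonzero $\lambda$-weights $\beta$ of $X$. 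Since $X^T\subseteq X^\lambda$, every such $\beta$ is a $T$-weight of $X/X^T$, hence $\text{eu}(N_\iota)\in\mathcal{I}$ becomes a unit after localization. This shows $\iota_*$ is split injective and $\iota^*$ split surjective on localized $K_0$.

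For surjectivity of $\iota_*$ I apply the Halpern--Leistner theorem to a linearization of $\mathcal{X}=X/T$ whose Kempf--Ness stratification has semistable locus exactly $\mathcal{Z}$; the destabilizing cocharacters $\lambda_\alpha$ of $T$ produce strata $\mathcal{S}_\alpha$ with fixed loci $\mathcal{Z}_\alpha = X^{\lambda_\alpha}/T \subsetneq \mathcal{X}$. Proposition~\ref{prop1} transfers \eqref{hl} to matrix factorizations, yielding a semiorthogonal decomposition of $\text{MF}(\mathcal{X}, f)$ whose central window is equivalent via $\iota^*$ to $\text{MF}(\mathcal{Z}, f|_\mathcal{Z})$ and whose flanking pieces are of the form $p_{\alpha *}q_\alpha^*\text{MF}(\mathcal{Z}_\alpha, f|_{\mathcal{Z}_\alpha})_{v_\alpha}$. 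I then induct on $\dim X$: the theorem applied to the strictly smaller ambient $T$-representation $X^{\lambda_\alpha}$ implies that after localization $K_0(D_{\text{sg}}(\mathcal{Z}_{\alpha,0}))_\mathcal{I}$ is generated by pushforwards from the common $T$-fixed locus $X^T/T \subseteq \mathcal{Z}$; composing with $p_{\alpha *}q_\alpha^*$ and applying the Euler-class relation from the reduction above places each flanking contribution inside the image of $\iota_*$, while the central window contributes exactly $\iota_* K_0(D_{\text{sg}}(\mathcal{Z}_0))_\mathcal{I}$ up to units.

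The chief obstacle I anticipate lies in this last step: the weight restrictions $v_\alpha$ that cut out the Halpern--Leistner windows interact subtly with $\mathcal{I}$-localization, and one must verify, using Proposition~\ref{zerodiv}, that the induced $\lambda_\alpha$-weight filtration on each $K_0(D_{\text{sg}}(\mathcal{Z}_{\alpha,0}))$ trivialises upon inverting $\mathcal{I}$, so that the full localized $K$-group (and not merely a graded piece) lands inside the image of $\iota_*$. A secondary technical point is to ensure that a linearization with $\mathcal{X}^{\text{ss}}=\mathcal{Z}$ can always be produced for an arbitrary $T$-representation, perhaps by enlarging the set of destabilizing cocharacters to cover every point of $\mathcal{X}\setminus\mathcal{Z}$ and then matching the resulting multi-stratum SOD with the single inductive structure above.
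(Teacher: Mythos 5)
Your first reduction (expressing $\iota_*$ through $p_*q^*$ and the Euler class of the zero section, with all factors lying in $\mathcal{I}$) and your self-intersection computation ($\iota^*\iota_*$ = multiplication by $\prod_{\beta}(1-q^{\beta})$, a unit after localization) agree with how the paper deduces the $\iota_*$ and $\iota^*$ statements once the $p_*q^*$ statement is known, and they correctly give split injectivity of $\iota_*$ and surjectivity of $\iota^*$ after inverting $\mathcal{I}$. The gap is in the remaining and central point, surjectivity. You propose a linearization of $\mathcal{X}=X/T$ ``whose Kempf--Ness stratification has semistable locus exactly $\mathcal{Z}$'' and a window category equivalent to $\text{MF}(\mathcal{Z}, f|_{\mathcal{Z}})$ via $\iota^*$. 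This cannot be arranged: a semistable locus is open, while $\mathcal{Z}=X^{\lambda}/T$ is a closed substack of positive codimension whenever $\lambda$ acts nontrivially, and Halpern--Leistner windows are always identified by restriction with the \emph{open} semistable locus, never with a closed fixed locus (already for $X=\mathbb{A}^2$ with $T=\mathbb{C}^*$ acting with weights $(1,-1)$ and $\lambda=\mathrm{id}$, no stratification as in Subsection \ref{KNstrata} has complement $\{0\}$). So the decomposition on which your induction on $\dim X$ rests is not available, and the issue you flag as a ``secondary technical point'' is in fact fatal to the argument as stated.

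What is missing is the mechanism that actually kills the complementary contribution after localization. The paper takes the single Kempf--Ness stratum $\mathcal{S}=X^{\lambda\geq 0}/T$, so the ``semistable'' locus is the open substack $\mathcal{U}=\mathcal{X}\setminus\mathcal{S}$; Proposition \ref{prop1} applied to \eqref{hl} gives $D_{\text{sg}}(\mathcal{X}_0)=\big\langle D_{\text{sg}}(\mathcal{Z}_0)_{<w}, \mathbb{D}_w, D_{\text{sg}}(\mathcal{Z}_0)_{\geq w}\big\rangle$ with $\mathbb{D}_w\cong D_{\text{sg}}(\mathcal{U}_0)$ and with the flanking pieces embedded by $p_*q^*$, hence on Grothendieck groups $K_0\left(D_{\text{sg}}(\mathcal{X}_0)\right)\cong K_0\left(D_{\text{sg}}(\mathcal{U}_0)\right)\oplus p_*q^*K_0\left(D_{\text{sg}}(\mathcal{Z}_0)\right)$. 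Then Takeda's equivariant localization theorem (Proposition \ref{takeda}), applied through the surjection $G_0(\mathcal{U}_0)\twoheadrightarrow K_0\left(D_{\text{sg}}(\mathcal{U}_0)\right)$ and the fact that $\mathcal{U}_0$ misses the $T$-fixed locus, gives $K_0\left(D_{\text{sg}}(\mathcal{U}_0)\right)_{\mathcal{I}}=0$, which is exactly the surjectivity of $p_*q^*$ after localization. Your proposal never invokes this (or any substitute) vanishing statement; without it, the split injectivity you prove does not yield the claimed isomorphisms. If you replace your induction by this one-stratum window decomposition plus Takeda's theorem, the rest of your write-up goes through and coincides with the paper's proof.
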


We review Takeda's localization theorem in $K$-theory \cite{tak}:

\begin{prop}\label{takeda}
In the above framework, we have that $G_i^T\left(Y\setminus Y^T\right)_{\mathcal{I}}=0$ for any $i\geq 0$.
\end{prop}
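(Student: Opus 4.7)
The plan is to reduce to the case of constant stabilizer via Thomason-type d\'evissage, and then to exploit the very explicit description of stabilizers arising from the ambient representation $X$ to exhibit an explicit element of $\mathcal{I}$ that annihilates $G$-theory on each stratum.

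First I would stratify $Y\setminus Y^T$ by stabilizer type. By equivariant generic flatness together with Noetherian induction, any $T$-equivariant coherent sheaf on $Y\setminus Y^T$ has a finite filtration whose graded pieces are pushforwards from $T$-invariant integral closed subschemes, and the localization exact sequence
\[
G_i^T(Z')_{\mathcal{I}}\to G_i^T(Z)_{\mathcal{I}}\to G_i^T(Z\setminus Z')_{\mathcal{I}}
\]
for any closed $T$-invariant $Z'\subset Z$ then reduces the statement to showing $G_i^T(U)_{\mathcal{I}}=0$ for a $T$-invariant nonempty open $U$ of each integral $T$-invariant subscheme $Z\subset Y\setminus Y^T$, which we are free to shrink. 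Shrinking $Z$ if necessary, we may assume $U$ has \emph{constant} stabilizer $T_U\subsetneq T$ (the stabilizer of the generic point of $Z$), which is strictly smaller than $T$ precisely because $Z\subset Y\setminus Y^T$.

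Next I would show that for such a $U$ and for any character $\beta$ of $T$ with $\beta|_{T_U}=0$, the element $1-q^\beta\in R(T)$ acts as zero on $G_i^T(U)$. This is where the hypothesis on constant stabilizer is used: because $T_U$ acts trivially on $U$, the $R(T)$-module structure on $G_i^T(U)$ factors through the restriction map $R(T)\to R(T_U)$, so the image of $1-q^\beta$ becomes $1-q^{\beta|_{T_U}}=1-1=0$. This is exactly the content of Thomason's equivariant localization argument, carried out one isotropy stratum at a time.

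Finally I would exhibit $\beta\in S$, i.e.\ a weight of $T$ appearing in $X/X^T$, with $\beta|_{T_U}=0$. Pick any closed point $y\in U\subset X$. Since $y\notin Y^T\subset X^T$, the image of $y$ in $X/X^T$ is non-zero, so there is a weight $\beta\in S$ for which the $\beta$-component $y_\beta$ of $y$ is non-zero. Then the pointwise stabilizer $T_y$, being the intersection of kernels of the weights of $T$ on which $y$ has nontrivial components, is contained in $\ker\beta$; since $T_U\subset T_y$, it follows that $\beta|_{T_U}=0$. Hence $1-q^\beta\in\mathcal{I}$ annihilates $G_i^T(U)$ and consequently $G_i^T(U)_{\mathcal{I}}=0$, which by the reduction in the first step gives $G_i^T(Y\setminus Y^T)_{\mathcal{I}}=0$.

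The main obstacle is the honest verification that the $R(T)$-module structure on $G_i^T(U)$ factors through $R(T)\to R(T_U)$; the cleanest way to achieve this is to invoke Thomason's original argument (equivariant generic flatness together with the description of $G$-theory of a $T$-scheme with trivial action as an $R(T)$-module via restriction), which already packages the d\'evissage and the stabilizer analysis together. Everything else is routine bookkeeping, the nontrivial input being the observation that the stabilizers occurring in $Y\setminus Y^T$ are cut out by weights from $S$ because $Y$ is embedded $T$-equivariantly in the representation $X$.
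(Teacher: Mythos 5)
Your overall strategy (stratify $Y\setminus Y^T$ by stabilizer type and run a Thomason-style d\'evissage) can be made to work, but the justification of its central step is wrong as stated, and this is a genuine gap. Constancy of the stabilizer $T_U$ on $U$ does \emph{not} imply that the $R(T)$-module structure on $G_i^T(U)$ factors through the restriction $R(T)\to R(T_U)$. Counterexample: $T=\mathbb{C}^*$ acting on $U=\mathbb{A}^2_{\mathbb{C}}\setminus\{0\}$ with both weights equal to $1$; the stabilizer is trivial at every point, $G_0^T(U)\cong G_0(\mathbb{P}^1)\cong\mathbb{Z}^2$, and $q$ acts by twisting by $\mathcal{O}(1)$, which is not the identity, so $1-q$ does not act by zero (it is only nilpotent, since $(1-q)^2=0$ here). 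What makes your step true is not constancy of stabilizers but Thomason's generic slice theorem: after shrinking $U$ further one may assume $U\cong (T/T_U)\times W$ equivariantly, with $T$ acting by translation on the first factor and trivially on $W$; only then does one get $G_i^T(U)\cong G_i(W)\otimes R(T_U)$ with the $R(T)$-action factoring through $R(T)\to R(T_U)$. So you must either insert this extra shrinking (citing or proving the slice theorem), or argue that the chosen $1-q^\beta$ acts nilpotently rather than by zero. The patch you propose at the end ("$G$-theory of a $T$-scheme with trivial action") addresses the wrong group: on your stratum it is $T_U$, not $T$, that acts trivially. Your final observation, that every stabilizer in $Y\setminus Y^T$ lies in $\ker\beta$ for some $\beta\in S$ because $Y$ sits inside the representation $X$, is correct and is indeed the reason the specific multiplicative set $\mathcal{I}$ suffices.

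For comparison, the paper's (Takeda's) argument avoids stratification entirely and is much shorter: $G_i^T(Y\setminus Y^T)$ is a module over the ring $K_0^T(Y\setminus Y^T)$, which receives a ring homomorphism by restriction from $K_0^T(X\setminus X^T)$; since $\iota_*:K_0^T(X^T)\to K_0^T(X)$ is multiplication by $\prod_{\beta\in S}\left(1-q^{\beta}\right)$ (both sides being $R(T)$), the localization sequence gives $K_0^T(X\setminus X^T)_{\mathcal{I}}=0$, hence the unit of $K_0^T(Y\setminus Y^T)_{\mathcal{I}}$ is annihilated and all modules over it vanish. In other words, the global structure of the ambient representation $X$ replaces the stabilizer-by-stabilizer analysis; your route is more general in spirit but requires the full strength of Thomason's slice/concentration machinery at exactly the point where your write-up currently asserts something false.
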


\begin{proof}
This follows from Takeda's original argument \cite[page 79]{tak}. 
Let $\iota:X^T\hookrightarrow X$ be the natural inclusion. 
$G_i^T\left(Y\setminus Y^T\right)$ is a $K_0^T\left(Y\setminus Y^T\right)$-module, so it suffices to show that $K_0^T\left(Y\setminus Y^T\right)_{\mathcal{I}}=0$. There is a restriction map of rings
\[K_0^T\left(X\setminus X^T\right)\to K_0^T\left(Y\setminus Y^T\right).\] It suffices to show that 
$K_0^T\left(X\setminus X^T\right)_{\mathcal{I}}=0$ because then the unit of $K_0^T\left(Y\setminus Y^T\right)_{\mathcal{I}}$ is annihilated and so $K_0^T\left(Y\setminus Y^T\right)_{\mathcal{I}}=0$. It thus suffices to show that 
\[\iota_*: 
K_0^T\left(X^T\right)_{\mathcal{I}}
\xrightarrow{\sim}
K_0^T\left(X\right)_{\mathcal{I}}.\] This is true because $\iota_*$ is multiplication by $\prod_{\beta\in S}\left(1-q^{\beta}\right)$.
\end{proof}

\begin{proof}[Proof of Theorem \ref{loc}]

Let $\mathcal{U}:=\X\setminus\mathcal{S}\subset \X$. Consider the natural inclusion $t:\mathcal{Z}\hookrightarrow \mathcal{S}$. Then $\iota=p\circ t$.
By Proposition \ref{prop1} and the semiorthogonal decomposition \eqref{hl}, there are semiorthogonal decompositions:
\begin{align*}
D_{\text{sg}}(\X_0)&=\big\langle D_{\text{sg}}(\mathcal{Z}_0)_{<w}, \mathbb{D}_w, D_{\text{sg}}(\mathcal{Z}_0)_{\geq w}\big\rangle,\\
D_{\text{sg}}(\mathcal{Z}_0)&=\big\langle D_{\text{sg}}(\mathcal{Z}_0)_{<w}, D_{\text{sg}}(\mathcal{Z}_0)_{\geq w}\big\rangle
\end{align*}
with $\mathbb{D}_w\cong D_{\text{sg}}(\mathcal{U}_0)$ by \eqref{hl}.
By passing to the Grothendieck group, there is a decomposition
$$K_0\left(D_{\text{sg}}(\mathcal{U}_0)\right)\oplus K_0\left(D_{\text{sg}}(\mathcal{Z}_0)\right)\cong K_0\left(D_{\text{sg}}(\mathcal{X}_0)\right),$$ where the map $K_0\left(D_{\text{sg}}(\mathcal{Z}_0)\right)\to K_0\left(D_{\text{sg}}(\mathcal{X}_0)\right)$ is $p_*q^*$. 
We show that \[K_0\left(D_{\text{sg}}(\mathcal{U}_0)\right)_{\mathcal{I}}=0.\] By the definition of the category of singularities, the map
$G_0(\mathcal{U}_0)\twoheadrightarrow K_0\left(D_{\text{sg}}(\mathcal{U}_0)\right)$ is surjective. 
We have that \[G_0\left(\X_0\setminus\mathcal{Z}_0\right)_{\mathcal{I}}\twoheadrightarrow G_0(\mathcal{U}_0)_{\mathcal{I}},\] so by Proposition \ref{takeda} we have that $G_0(\mathcal{U}_0)_{\mathcal{I}}=0.$
Thus 
\begin{equation}\label{takedaiso}
p_*q^*: K_0\left(D_{\text{sg}}(\mathcal{Z}_0)\right)_\mathcal{I}\xrightarrow{\sim} K_0\left(D_{\text{sg}}(\mathcal{X}_0)\right)_\mathcal{I}.
\end{equation}
Let $x\in K_0\left(D_{\text{sg}}(\mathcal{Z}_0)\right)$. Let $e$ be the Euler class of the vector bundle $q:\mathcal{S}\to\mathcal{Z}$.
%let $e$ be the Euler of the vector bundle $q:\mathcal{S}\to\mathcal{Z}$, and let $S'$ the set of weights of the normal bundle of $X^T\hookrightarrow X$. 
Then $e$ divides $e':=\prod_{\beta\in S}\left(1-q^{\beta}\right)$.
By the assumption $f|_{\mathcal{S}}=q^*\left(f|_{\mathcal{Z}}\right)$, we have that 
\[t_*(x)=e\cdot q^*(x).\]
%The map $\iota^*\iota_*$ is multiplication by $e$. 
The factors of $e$ are in the set $\mathcal{I}$,
so \eqref{takedaiso} implies that \[\iota_*:=p_*t_*: K_0\left(D_{\text{sg}}(\mathcal{Z}_0)\right)_\mathcal{I}\xrightarrow{\sim} K_0\left(D_{\text{sg}}(\mathcal{X}_0)\right)_\mathcal{I}.\]
The last statement follows from $\iota^*\iota_*$ being multiplication by $e'$ and using that the factors of $e'$ are in $\mathcal{I}$. 
\end{proof}

\textbf{Remark.} Via the restriction maps, Proposition \ref{zerodiv} and Theorem \ref{loc} also hold for open substacks of $\X$.

\section{The Hall algebra}\label{3}
\subsection{Definition of the Hall algebra.}\label{multi}
Let $(Q,W)$ be a quiver with potential. 
For $d\in\mathbb{N}^I$, consider the stack of representations \[\X(d)=R(d)/G(d),\] with regular function 
$\text{Tr}(W):\X(d)\to\mathbb{A}^1_{\mathbb{C}},$ see Subsection \ref{quivers} for more details. 
For $d,e\in\mathbb{N}^I$ two dimension vectors, 
consider the stack \[\X(d,e):=R(d,e)/G(d,e)\] of pairs of representations $ A\subset B$, where $A$ has dimension $d$ and $B$ has dimension $d+e$. 
Let $\theta\in\mathbb{Q}^I$ be a King stability condition. Define the slope function: $$\tau(d):=\frac{\sum_{i\in I}\theta^i d^i}{\sum_{i\in I} d^i}.$$
For a fixed slope $\mu$, let $\Lambda_{\mu}\subset \mathbb{N}^I$ be the monoid of dimension vectors with slope $\mu$. We denote by $\X(d)^{\text{ss}}\subset \X(d)$ the substack of $\theta$-semistable representations.
There is a cocharacter $\lambda_{d,e}$ whose diagram of fixed and attracting loci \eqref{pq} is
\begin{equation}\label{ppqq}
    \X(d)^{\text{ss}}\times \X(e)^{\text{ss}}\xleftarrow{q_{d,e}} \X(d,e)^{\text{ss}} \xrightarrow{p_{d,e}} \X(d+e)^{\text{ss}}.
    \end{equation}
Fix such a cocharacter $\lambda_{d,e}$.
The induced regular functions are compatible with respect to these maps: 
$$p_{d,e}^*\text{Tr}(W_{d+e})=q_{d,e}^*\left(\text{Tr}(W_d)+\text{Tr}(W_e)\right).$$ 
We use $p$ and $q$ instead of $p_{d,e}$ and $q_{d,e}$ when there is no danger of confusion.

For every edge $e\in E$, let $\mathbb{C}^*$ act on $\text{Hom}\,\left(\C^{s(e)},\C^{t(e)}\right)$ by scalar multiplication. We denote the product of these multiplicative groups by $\left(\mathbb{C}^*\right)^E$.
The Hall algebras considered in this paper are equivariant with respect to a torus $T$ such that $T\subset \left(\mathbb{C}^*\right)^E$ and $W$ is $T$-invariant. We say that $(Q,W)$ satisfies \textbf{Assumption A} if there exists an extra $\mathbb{C}^*\subset \left(\mathbb{C}^*\right)^E$
 such that the regular functions $\text{Tr}(W_d)$ are all homogeneous of weight $2$. We consider graded categories of matrix factorizations with respect to such a fixed $\mathbb{C}^*$. Different choices of such $\mathbb{C}^*$ give different categories $\text{MF}^{\text{gr}}$, but all these categories have the same Grothendieck group \cite[Corollary 3.13]{T2}.

Consider the diagonal map
$\delta: BT\to BT\times BT.$
There are induced maps 
\[ \delta: \left(\X(d)^{\text{ss}}\times\X(e)^{\text{ss}}\right)/T
\to
\left(\X(d)^{\text{ss}}/T\right)\times\left(\X(e)^{\text{ss}}/T\right).\]

\begin{defn}\label{multiplication}
Consider the functor
 \begin{multline*}
     m_{d,e}:=p_*q^*\,\text{TS}\,\delta^*:
 \text{MF}_T\left(\X(d)^{\text{ss}}, W_d\right)\,\boxtimes\, \text{MF}_T\left(\X(e)^{\text{ss}}, W_e\right) \\\to
 \text{MF}_T\left(\X(d+e)^{\text{ss}}, W_{d+e}\right),
 \end{multline*}
where $\text{TS}$ is the Thom-Sebastiani functor, see Subsection \ref{TSiso}. 
Under Assumption A, we consider 
the functor
\begin{multline*}
m_{d,e}:=p_*q^*\,\text{TS}\,\delta^*:
 \text{MF}^{\text{gr}}_T\left(\X(d)^{\text{ss}}, W_d\right)\,\boxtimes\, \text{MF}^{\text{gr}}_T\left(\X(e)^{\text{ss}}, W_e\right) \\\rightarrow 
 \text{MF}^{\text{gr}}_T\left(\X(d+e)^{\text{ss}}, W_{d+e}\right).\end{multline*}

\end{defn}

\begin{defn}
Consider the $\Lambda_{\mu}$-graded category
\[\text{HA}_T(Q,W)_{\mu}:=\bigoplus_{d\in\Lambda_\mu} \text{MF}_T\left(\X(d)^{\text{ss}}, W_d\right).\] Under Assumption A, we consider the $\Lambda_{\mu}$-graded category
\[\text{HA}^{\text{gr}}_T(Q,W)_{\mu}:=\bigoplus_{d\in\Lambda_\mu} \text{MF}^{\text{gr}}_T\left(\X(d)^{\text{ss}}, W_d\right).\] 
We call these categories the \textit{categorical Hall algebras} of $(Q,W)$. We call the Grothendieck group of these categories the \textit{K-theoretic Hall algebras} of $(Q,W)$. 
\end{defn}

In this section, we prove Theorem \ref{thm:1} and its version for graded matrix factorizations:

\begin{thm}\label{thm1}
The categories $\text{HA}_T(Q,W)_{\mu}$ and $\text{HA}^{\text{gr}}_T(Q,W)_{\mu}$ are monoidal with respect to the multiplication functors $m$.
\end{thm}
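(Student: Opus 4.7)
The plan is to prove associativity of the multiplication $m$; the unit is given by the structure sheaf of the trivial representation in dimension $0$ and is easy. So the content of the theorem is the identity $m_{d_1+d_2,d_3} \circ (m_{d_1,d_2} \boxtimes \operatorname{id}) = m_{d_1, d_2+d_3} \circ (\operatorname{id} \boxtimes\, m_{d_2,d_3})$ as functors on the triple product $\text{MF}_T(\X(d_1)^{\text{ss}},W_{d_1}) \boxtimes \text{MF}_T(\X(d_2)^{\text{ss}},W_{d_2}) \boxtimes \text{MF}_T(\X(d_3)^{\text{ss}},W_{d_3})$, and identically in the graded case. The strategy is the standard one for Hall-type multiplications: introduce a three-step flag stack through which both iterated products factor, and reduce the identity to base change and Thom--Sebastiani compatibility.

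Concretely, let $\X(d_1,d_2,d_3)^{\text{ss}}$ be the stack parametrizing flags $A_1 \subset A_2 \subset B$ with $A_1$ of dimension $d_1$, $A_2/A_1$ of dimension $d_2$, $B/A_2$ of dimension $d_3$, all semistable of slope $\mu$. It fits into a commutative diagram with natural projections
\begin{equation*}
\prod_{i=1}^3 \X(d_i)^{\text{ss}} \xleftarrow{\;Q\;} \X(d_1,d_2,d_3)^{\text{ss}} \xrightarrow{\;P\;} \X(d_1+d_2+d_3)^{\text{ss}},
\end{equation*}
together with two intermediate factorizations through $\X(d_1,d_2)^{\text{ss}}\times \X(d_3)^{\text{ss}}$ and $\X(d_1)^{\text{ss}}\times\X(d_2,d_3)^{\text{ss}}$, corresponding to the two bracketings. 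The map $P$ is proper (it forgets the flag) and $Q$ is an affine bundle with compatible potentials $P^*\operatorname{Tr}(W_{d_1+d_2+d_3})=Q^*(\operatorname{Tr}(W_{d_1})+\operatorname{Tr}(W_{d_2})+\operatorname{Tr}(W_{d_3}))$.

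The key point is that each bracketing produces a diagram in which the middle square, relating (a) the two-step flag stack composed with a product, and (b) the three-step flag stack, is Cartesian with the outer maps being compositions of a proper pushforward and a smooth pullback. For example, the square expressing the left bracketing has corners $\X(d_1,d_2)^{\text{ss}}\times \X(d_3)^{\text{ss}}$, $\X(d_1+d_2)^{\text{ss}}\times\X(d_3)^{\text{ss}}$, $\X(d_1,d_2,d_3)^{\text{ss}}$ and $\X(d_1+d_2,d_3)^{\text{ss}}$, with horizontal maps of type $p\times\operatorname{id}$ (proper) and vertical maps forgetting part of the flag (affine bundle). Base change for pullback and proper pushforward on matrix factorization categories, recorded in Subsection \ref{functo}, then lets one rewrite $m_{d_1+d_2,d_3}\circ(m_{d_1,d_2}\boxtimes\operatorname{id})$ as $P_* Q^* \operatorname{TS}\,\Delta^*$ where $\Delta: BT\to BT^{\times 3}$ is the diagonal and $\operatorname{TS}$ is the iterated Thom--Sebastiani functor for the three pieces. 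The right bracketing produces the same expression, by the analogous Cartesian square on the other side, combined with the associativity of $\operatorname{TS}$ (Subsection \ref{TSiso}) and of $\Delta^*$.

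The main obstacle is the base change step: one needs that the relevant squares are honestly Cartesian and that the base change isomorphism is available in the matrix factorization (or category of singularities) setting, equivariantly for $T$ and, under Assumption A, for the grading $\C^*$. This is precisely the content of the proper base change stated in Subsection \ref{functo}, applied to each Cartesian square above; compatibility of $\operatorname{TS}$ with pullback and with the diagonal is the other ingredient, and it is built into the definition of $\operatorname{TS}$ as an exterior tensor product. A minor book-keeping point is that the cocharacters $\lambda_{d_1+d_2,d_3}$, $\lambda_{d_1,d_2}\oplus 0$, and their analogues on the other side combine to the same attracting stratification of $\X(d_1,d_2,d_3)^{\text{ss}}$, so the proper maps $P$ are closed immersions composed with affine bundles in a consistent way on both sides; once this is checked, the associativity identity of functors follows formally, and the same argument applies verbatim to $\text{HA}^{\text{gr}}_T(Q,W)_\mu$.
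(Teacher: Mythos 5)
Your proposal is essentially the paper's own argument: associativity is reduced to the three-step flag stack $\X(d_1,d_2,d_3)^{\text{ss}}$, the two bracketings are identified with $P_*Q^*\,\text{TS}\,\delta^*$ via proper base change on the Cartesian squares relating the two-step and three-step flag stacks, together with the compatibility of the Thom--Sebastiani functor and of the diagonal pullback on $BT$ (which the paper checks by an explicit auxiliary diagram). The only quibble is your parenthetical claim that the proper maps are closed immersions composed with affine bundles (they are merely proper, being twisted by $G/P$), but this is not load-bearing since only properness and base change are used.
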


\begin{proof}
We discuss the statement for $\text{MF}$, the one for $\text{MF}^{\text{gr}}$ follows in the same way.
Let $d, e, f\in \Lambda_\mu$. Let $\X(d,e,f)$ be the stacks of triples of representations of $Q$ \[A\subset B\subset C\] with $A$ of dimension $d$, $B/A$ of dimension $e$, and $C/B$ of dimension $f$.
We use the shorthand notations
\begin{align*}
    \textbf{C}_T(d)&:=\text{MF}\left(\X(d)^{\text{ss}}, W_d\right),\\
    \textbf{C}_T(d,e)&:=\text{MF}\left(\X(d,e)^{\text{ss}}, W_{d+e}\right),\\
    \textbf{C}_T(d,e,f)&:=\text{MF}\left(\X(d,e,f)^{\text{ss}}, W_{d+e+f}\right)\,\text{etc.}
\end{align*}
We also use the shorthand notation $\X_T(d)=\X(d)/T$.
We need to show that the following diagram commutes:
\begin{equation}\label{diagr1}
\begin{tikzcd}
  \textbf{C}_T(d)\boxtimes \textbf{C}_T(e)\boxtimes \textbf{C}_T(f) \arrow[r, "q_{e,f}^* \delta^*"] \arrow[d, "q_{d,e}^*\delta^*"]
    & \textbf{C}_T(d)\boxtimes \textbf{C}_T(e,f) \arrow[d, "q_1^*\delta^*"] \arrow[r, "p_{e,f*}"] & \textbf{C}_T(d)\boxtimes \textbf{C}_T(e+f) \arrow[d, "q_{d,e+f}^*\delta^*"] \\
  \textbf{C}_T(d,e)\boxtimes \textbf{C}_T(f) \arrow[d, "p_{d,e*}"] \arrow[r, "q_2^*\delta^*"] 
& \textbf{C}_T(d,e,f) \arrow[d, "p_{1*}"] \arrow[r, "p_{2*}"] & 
\textbf{C}_T(d, e+f) \arrow[d, "p_{d,e+f*}"] \\
\textbf{C}_T(d+e)\boxtimes \textbf{C}_T(f) \arrow[r, "q_{d+e,f}^*\delta^*"] & \textbf{C}_T(d+e,f) \arrow[r, "p_{d+e,f*}"] & \textbf{C}_T(d+e+f),
\end{tikzcd}
\end{equation}
where we abused notation and dropped $\text{TS}$ from the notation of the morphisms. The Thom-Sebastiani functor commutes with the pullbacks and pushforwards above. 
For the upper right corner, the maps are induced from the ones of the cartesian diagram \begin{equation*}
\begin{tikzcd}
  \X_T(d,e,f) \arrow[r,"p_2"] \arrow[d,"\delta q_1"]
    & \X_T(d,e+f) \arrow[d,"\delta q_{d,e+f}"] \\
  \X_T(d)\times\X_T(e,f) \arrow[r,"p_{e,f}"]
&\X_T(d)\times\X_T(e+f),
\end{tikzcd}
\end{equation*}
and one can use proper base change to deduce that $p_{2*}q_1^*\delta^*=q_{d,e+f}^*\delta^* p_{e,f*}$. A similar argument shows that the lower left corner commutes.
The lower right corner clearly commutes.
For the upper left corner of diagram \eqref{diagr1}, consider the diagram
\begin{equation}\label{diagr}
\begin{tikzcd}
\X_T(d,e,f)\arrow[d,"q_2"]\arrow[r,"q_1"]&(\X(d)\times\X(e,f))/T\arrow[d,"q_{e,f}"]\arrow[r,"\delta"]&\X_T(d)\times\X_T(e,f)\arrow[d,"q_{e,f}"]\\
(\X(d,e)\times\X(f))/T\arrow[d,"\delta"]\arrow[r,"q_{d,e}"]& (\X(d)\times\X(e)\times\X(f))/T\arrow[r,"\delta"]\arrow[d,"\delta"]& \X_T(d)\times (\X(e)\times\X(f))/T\arrow[d,"\text{id}\,\boxtimes\,\delta"]\\
\X_T(d,e)\times\X_T(f)\arrow[r,"q_{d,e}"]& (\X(d)\times\X(e))/T\times\X_T(f)\arrow[r,"\delta\,\boxtimes\,\text{id}"]&\X_T(d)\times\X_T(e)\times\X_T(f),
\end{tikzcd}
\end{equation}
where we have slightly abused notation involving the maps above.
The upper left corner of \eqref{diagr} clearly commutes. The upper right and lower left corners are base change diagrams. The lower right corner of \eqref{diagr} commutes because all maps are base-change of the map 
\[\X_T(d)\times\X_T(e)\times\X_T(f)\to BT\times BT\times BT\]
along the maps in the 
cartesian diagram
\begin{equation*}
\begin{tikzcd}
  BT \arrow[r, "\delta"] \arrow[d, "\delta"]
    & BT\times BT \arrow[d, "\delta\times 1"] \\
  BT\times BT \arrow[r, "1\times\delta"]
& BT \times BT\times BT.
\end{tikzcd}
\end{equation*}
The diagram \eqref{diagr} commutes and thus diagram \eqref{diagr1} commutes as well.
\end{proof}

\subsection{Comparison with the preprojective KHA}\label{compHA}

\subsubsection{}\label{Koszuls}
Consider a quiver $\widetilde{Q}=\left(I, \widetilde{E}\right)$ and a decomposition of sets  $\widetilde{E}=E^d\sqcup C$. 
Let $Q^d=(I,E^d)$ and $Q'=(I,C)$.
The group $\mathbb{C}^*$ acts on representations of $\widetilde{Q}$ by scaling the linear maps corresponding to edges in $C$ with weight $2$. 
Consider a potential $\widetilde{W}$ of $\widetilde{Q}$ on which 
$\mathbb{C}^*$ acts with weight $2$. The set $C$ is called a \textit{cut} for $\left(\widetilde{Q}, \widetilde{W}\right)$ in the literature. 
Denote by $\widetilde{\X(a)}$ the moduli stack of representations of dimension $a$ for the quiver $\widetilde{Q}$ and by $\X^d(a)=R^d(a)/G(a)$ the analogous stack for the quiver $Q^d$. We consider the category of graded matrix factorizations $\text{MF}^{\text{gr}}\left(\widetilde{\X(a)}, \widetilde{W}_a\right)$ with respect to the action of the group $\mathbb{C}^*$ mentioned above.
Denote the representation space of $Q'$ by 
$C(a)$. 
%The dual $C(d)^{\vee}$ is naturally isomorphic to $\overline{C}(d)$, the representation space of the opposite quiver $\overline{Q'}$. 
We abuse notation and denote by $C(a)$ the natural vector bundle on $\X^d(a)$. 
Write \[\widetilde{W}=\sum_{c\in C}cW_c,\] where $W_c$ is a path of $Q^d$. Define the algebra \[P:=\mathbb{C}\left[Q^d\right]\Big/\mathcal{J},\] where $\mathcal{J}$ is the two-sided ideal generated by $W_c$ for $c\in C$.
The potential $\widetilde{W}$ induces a section $s\in \Gamma\left(\X^d(a), C(a)^{\vee}\right)$, and thus a map $\partial:C(a)\to \mathcal{O}_{\X^d(a)}$. The corresponding regular function constructed in \eqref{defreg} is 
\[\text{Tr}\,\widetilde{W}:\widetilde{\X(a)}\to\mathbb{A}^1_{\mathbb{C}}.\]
The moduli stack of representations of $P$ of dimension $a$ is the Köszul stack
\begin{equation}\label{der}
\mathfrak{P}(a):=\text{Spec}\left(\mathcal{O}_{R^d(a)}\left[C(a)[1];\partial\right]\right)\big/ G(a).
\end{equation}
%This is the moduli stack of representations of the algebra $P=\mathbb{C}[Q]/I$, where $I$ is the two-sided ideal generated by $W_c$ for $c\in C$.There is a vector bundle $C(d)^{\lambda\leq 0}\cong \left(\overline{C}(d)^{\lambda\geq 0}\right)^{\vee}$ on $\X(d)^{\lambda\geq 0}$. For a cocharacter $\lambda$ of $G(d)$, define\[\PP(d)^{\lambda\geq 0}:=\mathcal{O}_{\X(d)^{\lambda\geq 0}}\left[C(d)^{\lambda\leq 0}[1];\partial\right].\]
%We have that $\overline{C}(d)^{\lambda\geq 0}\cong C(d)^{\lambda\leq 0}$. For dimension vectors $d$ and $e$, let $\lambda$ be the anti-dominant cocharacter corresponding to the partition $(d,e)$ of $d+e$.
%such that $\X^{\lambda\geq 0}=\X(d,e)$.Define$\PP(d,e):=\PP(d+e)^{\lambda\geq 0}.$
%There are quasi-smooth maps
%\begin{equation}\label{e2}
%   \PP(d)\times\PP(e) \xleftarrow{q_{d,e}}\PP(d,e)\xrightarrow{p_{d,e}}\PP(d+e).
%\end{equation}
By the dimensional reduction equivalence \eqref{isik}, see also \cite[Appendix A.3]{d} for the argument in cohomology, we have an equivalence:
\begin{equation}\label{Koszul}
\Phi: D^b\left(\mathfrak{P}(a)\right)\cong \text{MF}^{\text{gr}}\left(\widetilde{\X(a)}, \widetilde{W}\right).
\end{equation}

\subsubsection{}
Let $Q=(I,E)$ be a quiver and consider the tripled quiver $\left(\widetilde{Q}, \widetilde{W}\right)$ from Subsection \ref{tripledef}. $Q^d$ is the doubled quiver of $Q$.
Let $C=\{\omega_i|\,i\in I\}$. Then $Q^d$ is the doubled quiver of $Q$.
In this case, the ideal $\mathcal{J}$ is \[\mathcal{J}=\left(\sum_{\substack{e\in E,\\ s(e)=i}}[\overline{e}, e], i\in I\right),\] so the algebra $P$ is the preprojective algebra of $Q$ and thus $\mathfrak{P}(a)$ is the stack of representation of $P$ of dimension $a$. Consider the categorical preprojective Hall algebra studied by Varagnolo--Vasserot \cite{VV}:
\[\text{HA}_T(Q):=\bigoplus_{d\in\mathbb{N}^I}D^b_T\left(\mathfrak{P}(d)\right).\]
%Choose a stability condition $\theta$ and a slope $\mu\in\mathbb{Q}\cup\{\infty\}$.
We obtain an equivalence of Hall algebra categories
\[\text{HA}_T(Q)\cong \text{HA}_T\left(\widetilde{Q}, \widetilde{W}\right).\]
%where the left hand side is the (categorical) preprojective Hall algebra studied by Varagnolo--Vasserot \cite{VV}. 

\subsubsection{}\label{comparisontwoKHAs}
Let $a,b,d\in\mathbb{N}^I$ with $d=a+b$. We denote the multiplication maps for $(a,b)$ for $\widetilde{Q}$ by $\widetilde{p}$, $\widetilde{q}$ and for $Q^d$ by $p$, $q$. Let $\lambda$ be a cocharacter $\lambda_{a,b}$ as in \eqref{ppqq}. 
Define the line bundle on $\widetilde{\X(a)}\times\widetilde{\X(b)}$:
\begin{equation}\label{omega}
\omega_{a,b}:=\det \Big(C(d)^{\lambda\leq 0}\big/C(d)^{\lambda}\Big)
\end{equation}
%on $\widetilde{\X(d)}^\lambda$. 
We twist the multiplication $\widetilde{m}$ on $\text{HA}_T\left(\widetilde{Q}, \widetilde{W}\right)$ and define
\begin{multline*}
\widetilde{m'}:=\widetilde{p}_*\widetilde{q}^*\left(-\otimes\omega_{a,b}\right)\,\text{TS}\,\delta^*: \text{MF}^{\text{gr}}_T\left(\widetilde{\X(a)}, \widetilde{W_a}\right)\otimes \text{MF}^{\text{gr}}_T\left(\widetilde{\X(b)}, \widetilde{W_b}\right)\\
%\boxtimes \text{MF}^{\text{gr}}\left(\widetilde{\X(e)}, \widetilde{W}\right)
\to 
\text{MF}^{\text{gr}}_T\left(\widetilde{\X(d)}, \widetilde{W_d}\right).
\end{multline*}
Let $m$ be the multiplication of the preprojective $\text{HA}_T(Q)$. By a $T$-equivariant version of \cite[Proposition 2.2]{P3}, 
the following diagram commutes:
\begin{equation}\label{compKHA}
    \begin{tikzcd}
    D^b_T\left(\mathfrak{P}(a)\right)\otimes  D^b_T\left(\mathfrak{P}(b)\right) 
    %\boxtimes D^b(\PP(e))
    \arrow[d, "\Phi\,\otimes\,\Phi"]\arrow[r,"m"]& D^b_T\left(\mathfrak{P}(d)\right)\arrow[d,"\Phi"]\\
    \text{MF}^{\text{gr}}_T\left(\widetilde{\X(a)}, \widetilde{W_a}\right)\otimes \text{MF}^{\text{gr}}_T\left(\widetilde{\X(b)}, \widetilde{W_b}\right)
    %\boxtimes \text{MF}^{\text{gr}}\left(\widetilde{\X(e)}, \widetilde{W}\right)
    \arrow[r,"\widetilde{m'}"]&
    \text{MF}^{\text{gr}}_T\left(\widetilde{\X(d)}, \widetilde{W_d}\right).
    \end{tikzcd}
\end{equation}
When passing to $K_0$, the multiplication $\widetilde{m'}$ is conjugation of $\widetilde{m}$ by an explicit rational function, see \cite[Subsection 2.3.7]{VV}.

\subsection{Examples of KHA}\label{6}

\subsubsection{The potential zero case}
Let $Q$ be an arbitrary quiver.
Let $i,i'$ be vertices of $Q$ and let $\{1,\cdots, \varepsilon(i,i')\}$ be the set of edges from $i$ to $i'$. Consider the action of $\left(\mathbb{C}^*\right)^{\varepsilon(i,i')}$ on $R(d)$ whose $j$th copy acts on $R(d)$ with weight $1$ on the factor $\text{Hom}\left(\mathbb{C}^{d_{s(j)}}, \mathbb{C}^{d_{t(j)}}\right)$ corresponding to the edge $j$. Denote by $q_j$ the weight corresponding to the $j$th copy of $\mathbb{C}^*$. 
Define
\begin{equation}\label{zeta}
    \zeta_{ii'}(z):= \frac{\left(1-q_1^{-1}z^{-1}\right)\cdots \left(1-q_{\varepsilon(i,i')}^{-1}z^{-1}\right)}{\left(1-z^{-1}\right)^{\delta_{ii'}}}, \end{equation}
where $\delta_{ii'}$ is $1$ if $i=i'$ and $0$ otherwise. 
Let $T$ be a subtorus of $\left(\mathbb{C}^*\right)^E$ which fixes $W$. We also use the notation $q_j$ for the corresponding weight of $T$. 
For $d\in\mathbb{N}^I$, let $\mathfrak{S}_d:=\times_{i\in I}\,\mathfrak{S}_{d_i}$ be the Weyl group of $G(d)$. 

\begin{prop}\label{shuffle}
The $\mathbb{N}^I$-graded vector space of $\text{KHA}_T(Q,0)$ has $d$ graded space 
\[K_0^T(\X(d))\cong K_0(BT)\left[z_{i,j}^{\pm 1}\right]^{\mathfrak{S}_d},\]
where $i\in I$ and $1\leq j\leq d_i$. 
Let $f\in K_0^T(\X(a))$, $g\in K_0^T(\X(b))$ with $a+b=d$. Then the multiplication in $\text{KHA}_T(Q,0)$ is
\[f\cdot g=\sum_{w\in \mathfrak{S}_{d}/\mathfrak{S}_a\times\mathfrak{S}_b} w\left(fg\prod_{\substack{i,i'\in I\\ j\leq a_i\\ j'>a_{i'}}}\zeta_{ii'}\left(\frac{z_{ij}}{z_{i'j'}}\right)
\right).\]
\end{prop}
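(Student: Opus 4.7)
The plan is to first identify the underlying $K_0(BT)$-module $\bigoplus_d K_0^T(\X(d))$, and then read the multiplication off the correspondence \eqref{ppqq} by decomposing the maps $p$ and $q$ into elementary pieces.

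For the first assertion, since $W=0$ one has $K_0(\text{MF}_T(\X(d),0))\cong K_0^T(\X(d))$ by \cite[Corollary 3.13]{T2} and the discussion in Subsection \ref{compHA}. Because $R(d)$ is a $T\times G(d)$-equivariant vector space, $K_0^T(\X(d))\cong K_0(B(T\times G(d)))\cong K_0(BT)\otimes_{\Z} K_0(BG(d))$, and the splitting principle applied to $G(d)=\prod_{i\in I} GL(d_i)$ identifies $K_0(BG(d))$ with $\Z[z_{i,j}^{\pm 1}]^{\mathfrak{S}_d}$, where $z_{i,1},\dots,z_{i,d_i}$ are the Chern roots at vertex $i$.

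For the multiplication, let $P\subset G(d)$ be the parabolic preserving a flag $V_A\subset V_d$ with $\dim V_A=a$, with Levi $L=G(a)\times G(b)$, and let $R(a,b)\subset R(d)$ be the subspace of flag-preserving representations, so that $\X(a,b)=R(a,b)/P$. The map $q$ factors as
\[R(a,b)/P\longrightarrow (R(a)\times R(b))/P\longrightarrow (R(a)\times R(b))/L=\X(a)\times\X(b),\]
where the first arrow is a $P$-equivariant vector bundle (fiber $\bigoplus_{e\in E}\Hom(V_b^{s(e)},V_A^{t(e)})$) and the second is an affine bundle with fiber $P/L$; both induce isomorphisms on $K$-theory. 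Hence $q^*(f\boxtimes g)$ is the product $fg\in K_0(BT)[z_{i,j}^{\pm 1}]^{\mathfrak{S}_a\times\mathfrak{S}_b}$, where $f$ uses the variables $z_{i,j}$ with $j\leq a_i$ (on the sub) and $g$ those with $j>a_i$ (on the quotient). Dually, $p$ factors as a closed immersion $\iota:R(a,b)/P\hookrightarrow R(d)/P$ followed by the flag bundle $\pi:R(d)/P\to R(d)/G(d)=\X(d)$. Fixing the convention that the characters of $V^i$ are $z_{i,j}^{-1}$, the normal bundle of $\iota$ decomposes $L$-equivariantly as $N_\iota=\bigoplus_{e\in E}\Hom(V_A^{s(e)},V_b^{t(e)})$ with weights $q_e z_{s(e),j}/z_{t(e),k}$ for $j\leq a_{s(e)}$ and $k>a_{t(e)}$. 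Therefore $\iota_*$ is multiplication by the $K$-theoretic Euler class
\[\prod_{e\in E}\prod_{\substack{j\leq a_{s(e)}\\ k>a_{t(e)}}}\bigl(1-q_e^{-1}z_{t(e),k}/z_{s(e),j}\bigr),\]
which is exactly the collection of numerators of $\prod_{i,i',j\leq a_i,j'>a_{i'}}\zeta_{ii'}(z_{ij}/z_{i'j'})$. The pushforward $\pi_*$ is computed by the Atiyah--Bott $K$-theoretic formula for $BP\to BG$: for $h\in K_0(BT)[z_{i,j}^{\pm 1}]^{\mathfrak{S}_a\times\mathfrak{S}_b}$,
\[\pi_*(h)=\sum_{w\in\mathfrak{S}_d/\mathfrak{S}_a\times\mathfrak{S}_b} w\!\left(\frac{h}{\prod_{i\in I}\prod_{\substack{j\leq a_i\\ j'>a_i}}\bigl(1-z_{i,j'}/z_{i,j}\bigr)}\right),\]
where the denominator ranges over the roots of $\mathfrak{g}(d)/\mathfrak{l}$; these are precisely the factors $(1-z_{i'j'}/z_{ij})^{\delta_{ii'}}$ of $\zeta$.

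Composing $\iota_*$ and $\pi_*$ with the identification $q^*(f\boxtimes g)=fg$ gives the shuffle formula. The main technical point is to align the $T$-equivariant weight conventions so that the Euler class $\iota^*\iota_*(1)$ together with the $\pi_*$-denominator reassemble exactly into $\prod\zeta_{ii'}(z_{ij}/z_{i'j'})$; once this bookkeeping is done, $\mathfrak{S}_a\times\mathfrak{S}_b$-invariance of $fg$ makes the Weyl-type sum over $\mathfrak{S}_d/\mathfrak{S}_a\times\mathfrak{S}_b$ well-defined and yields the identity of the proposition.
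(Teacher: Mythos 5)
Your proposal is correct and follows essentially the same route as the paper: it factors the multiplication through $q^*$ (an isomorphism, since $q$ is an affine-bundle-type map), the closed immersion $\iota$ of $R(a,b)/G(a,b)$ into $R(d)/G(a,b)$ (pushforward given by the $K$-theoretic Euler class of the normal bundle, matching the $\zeta$-numerators), and the proper projection $\pi$ to $\X(d)$ (pushforward given by the Weyl-sum formula, for which the paper cites Yang--Zhao), exactly as in the paper's proof. The extra details you supply on identifying $K_0^T(\X(d))$ with $K_0(BT)[z_{i,j}^{\pm 1}]^{\mathfrak{S}_d}$ and on the weight bookkeeping are consistent with the paper's conventions.
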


\begin{proof}
Consider the maps \begin{align*}
    \iota_{a,b}&:R(a,b)/G(a,b)\hookrightarrow R(d)/G(a,b)\\
\pi_{a,b}&:R(d)/G(a,b)\to \X(d).
\end{align*}
The multiplication is defined by the composition
\[\X(a)\times\X(b)\xrightarrow{q_{a,b}^*} \X(a,b)\xrightarrow{\iota_{a,b*}} R(d)/G(a,b)\xrightarrow{\pi_{a,b*}} \X(d).\]
The pullback map is an isomorphism \begin{equation}\label{qiso}
    q_{a,b}^*:K_0^T(\X(a)\times\X(b))\cong K_0^T(\X(a,b)).
    \end{equation}
    Let $N$ be the normal bundle of the map $\iota_{a,b}$.
The weights of $N^{\vee}$ are 
$q^{-1}_ez_{ij}^{-1}z_{i'j'}$, where $j\leq a_i$, $j'>a_{i'}$, and $e\in\{1,\cdots,\varepsilon(i,i')\}$ is an edge between $i$ and $i'$. 
%We have that $K_0^T(R(a,b)/G(a,b))\cong K_0^T(R(d)/G(a,b))$. 
For $h\in K_0^T(\X(a,b))$, we thus have that
\begin{equation}\label{1}
\iota_{a,b*}(h)=h\prod_{\substack{i,i'\in I\\ j\leq a_i\\ j'>a_{i'}}}\left(1-q_1^{-1} z_{ij}^{-1}z_{i'j'}\right)\cdots \left(1-q_{\varepsilon(i,i')}^{-1} z_{ij}^{-1}z_{i'j'}\right).
\end{equation}
Further, for $h\in K_0^T(R(d)/G(a,b))$, we have that
\begin{equation}\label{22}
\pi_{a,b*}(h)=\sum_{w\in\mathfrak{S}_{d}/\mathfrak{S}_a\times\mathfrak{S}_b} w\left(\frac{h}{\prod_{\substack{i\in I\\ j\leq a_i<k}}\left(1-z_{ij}^{-1}z_{ik}\right)}\right),
\end{equation}
see, for example, \cite[Proposition 1.2 or the proof of Proposition 2.3]{yz}.
The formula for the multiplication of the $\text{KHA}_T(Q,0)$ follows from \eqref{qiso}, \eqref{1}, and \eqref{22}.
\end{proof}

A shuffle formula for the product of $\text{CoHA}$ appears in \cite[Section 2.4]{ks} and for a general oriented cohomological theory in \cite{yz}.

Proposition \ref{shuffle} implies that:
\begin{cor}
Let $J$ be Jordan quiver. Consider the action of $\mathbb{C}^*$ which scales representations of $J$ with weight $1$.
There is an isomorphism $$\text{KHA}_{\C^*}(J,0)\cong U_q^{>}(L\mathfrak{sl}_2).$$
\end{cor}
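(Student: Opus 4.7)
The plan is to apply Proposition \ref{shuffle} and then match the resulting shuffle algebra to a known presentation of $U_q^{>}(L\mathfrak{sl}_2)$. Specialized to the Jordan quiver $J$, where $I=\{1\}$ and $\varepsilon(1,1)=1$, Proposition \ref{shuffle} gives
$$K_0^{\mathbb{C}^*}(\X(d))\cong \mathbb{Z}[q^{\pm 1}]\bigl[z_1^{\pm 1},\dots,z_d^{\pm 1}\bigr]^{\mathfrak{S}_d}$$
with shuffle product governed by the single kernel
$$\zeta(z/w)=\frac{1-q^{-1}w/z}{1-w/z}.$$
So the first step is simply to read off this explicit shuffle description.

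Next, I would compare this shuffle algebra to the Feigin--Odesskii / Enriquez realization of $U_q^{>}(L\mathfrak{sl}_2)$. Concretely, I would take the elements $z^n\in K_0^{\mathbb{C}^*}(\X(1))$ for $n\in\mathbb{Z}$ as Drinfeld-type generators $x_n^+$. Using the shuffle formula on a degree-$(1,1)$ product, a direct computation produces the current relation
$$(z-q^{-1}w)\,x^+(z)\,x^+(w)=(q^{-1}z-w)\,x^+(w)\,x^+(z),$$
which, after the standard identification of $q$ with the quantum parameter, is the defining relation of $U_q^{>}(L\mathfrak{sl}_2)$ in Drinfeld new realization. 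In rank one there is no quantum Serre relation, so only this quadratic relation has to be checked.

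It then remains to prove that the degree-$1$ subspace generates the whole shuffle algebra. I would argue this by induction on $d$: given a symmetric Laurent polynomial $f(z_1,\dots,z_d)$, one expresses it as a sum of shuffle products of lower-degree elements by clearing the shuffle kernel poles and using the density of "pure tensor" generators $z_1^{n_1}\cdots z_d^{n_d}$ under the symmetrizing operator appearing in Proposition \ref{shuffle}; this is exactly the surjectivity part of the shuffle realization, which is standard in the $L\mathfrak{sl}_2$ case (see e.g.\ work of Negu\c{t} and of Schiffmann--Vasserot).

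The main obstacle is this last surjectivity/generation step: everything else is a routine translation of Proposition \ref{shuffle} into a current-algebra presentation, but proving that the shuffle algebra produced by the KHA coincides with the full positive half — and not merely with a proper shuffle subalgebra — requires the correct normalization of the generators and a careful accounting of integrality versus working over $\mathbb{C}(q)$. Fixing this normalization and citing the appropriate shuffle-realization theorem for $U_q^{>}(L\mathfrak{sl}_2)$ is where the argument is genuinely non-formal.
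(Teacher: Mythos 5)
Your proposal follows essentially the same route as the paper: specialize Proposition \ref{shuffle} to the Jordan quiver to get the explicit shuffle algebra with kernel $\zeta(z)=\frac{1-q^{-1}z^{-1}}{1-z^{-1}}$, and then identify it with $U_q^{>}(L\mathfrak{sl}_2)$ via the known shuffle realization; the paper handles both your relation-matching and the generation/surjectivity step (including the integral-form issue you flag) in one stroke by citing Tsymbaliuk's shuffle realization theorem \cite[Theorem 3.5]{ts}.
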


\begin{proof}

The dimension $d$ graded component of $\text{KHA}_{\C^*}(J,0)$ is $$K_0^{\C^*}(\X(d))= \mathbb{Z}\left[q^{\pm 1}\right]\left[z_1^{\pm 1},\cdots, z_d^{\pm 1}\right]^{\mathfrak{S}_d}.$$ In this case, 
\[\zeta(z)=\frac{1-q^{-1}z^{-1}}{1-z^{-1}}.\] Consider dimension vectors $a,b,d\in\mathbb{N}$ such that $a+b=d$.
The multiplication of $f\in K_0^{\C^*}(\X(a))$ and $g\in K_0^{\C^*}(\X(b))$ is
$$(f\cdot g)(z_1,\cdots, z_{d})=\sum_{\mathfrak{S}_d/\mathfrak{S}_a\times\mathfrak{S}_b}
\left(f\left(z_1,\cdots, z_a\right)g\left(z_{a+1},\cdots, z_d\right)\prod_{\substack{1\leq i\leq a,\\
a+1\leq j\leq d}} \zeta\left(\frac{z_i}{z_j}\right)\right).$$ By \cite[Theorem 3.5]{ts}, the quantum group $U_q^{>}(L\mathfrak{sl}_2)$ has the same shuffle product description.
\end{proof}

\subsubsection{}
Let $(Q,W)$ be an arbitrary quiver with potential and let $T$ be a torus preserving the potential $W$. 

\begin{prop}\label{defo}
Assume that $R(d)^{T(d)\times T}$ is in the zero locus of $\text{Tr}\left(W_d\right)$.
The inclusion $\iota_d:\X(d)_0\to \X(d)$ induces an algebra morphism 
$$\iota_{d*}: \text{KHA}_T(Q,W)\to \text{KHA}_T(Q,0).$$
\end{prop}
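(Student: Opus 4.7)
The plan is to prove this in two stages: first verify that the pushforward $\iota_{d*}$ descends to a well-defined map on critical K-theory, and then verify that it respects the Hall algebra multiplications. For well-definedness, the proper closed immersion $\iota_d$ induces a pushforward $\iota_{d*}: G_0^T(\X(d)_0) \to G_0^T(\X(d)) = K_0^T(\X(d))$, and by the right-exact sequence
\[K_0^T(\X(d)_0) \to G_0^T(\X(d)_0) \to K_0(D_{\text{sg}, T}(\X(d)_0)) \to 0,\]
it suffices to show $\iota_{d*}$ annihilates perfect classes. The Koszul presentation $\iota_{d*}\mathcal{O}_{\X(d)_0} \simeq [\mathcal{O}_{\X(d)} \xrightarrow{\text{Tr}(W_d)} \mathcal{O}_{\X(d)}]$ has class zero in $K_0^T(\X(d))$ since $T$ preserves $W$, so both terms carry the same equivariant class; general perfect classes then vanish under $\iota_{d*}$ by the projection formula together with the surjectivity of $\iota_d^*$ on perfect K-theory (itself a consequence of the derived identification of $\text{Perf}(\X(d)_0)$).

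For multiplicativity, I consider the commutative diagram involving $i: \X(d)_0 \times \X(e)_0 \hookrightarrow (\X(d) \times \X(e))_0$ implementing Thom--Sebastiani (so $\text{TS} = i_*$ on $D_{\text{sg}}$), the zero-fiber inclusion $\iota_{d \otimes e}: (\X(d) \times \X(e))_0 \hookrightarrow \X(d) \times \X(e)$, and the induced maps $q_0, p_0, \iota_{d,e}, \iota_{d+e}$ on $\X(d,e)$ and its zero fiber. Crucially, the square with arrows $q, q_0, \iota_{d,e}, \iota_{d \otimes e}$ is derived Cartesian thanks to $p^* W_{d+e} = q^*(W_d + W_e)$, and $\iota_{d \otimes e} \circ i = \iota_d \times \iota_e$. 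Tracing through:
\begin{align*}
\iota_{d+e*} m^W &= \iota_{d+e*} p_{0*} q_0^* i_* \delta^* = p_* \iota_{d,e*} q_0^* i_* \delta^* \\
&= p_* q^* \iota_{d \otimes e *} i_* \delta^* = p_* q^* (\iota_d \times \iota_e)_* \delta^* \\
&= p_* q^* \delta^* (\iota_{d*} \boxtimes \iota_{e*}) = m^0 (\iota_{d*} \otimes \iota_{e*}),
\end{align*}
using (in order) commutation of pushforwards via $\iota_{d+e} \circ p_0 = p \circ \iota_{d,e}$, derived proper base change in the middle Cartesian square, the composition identity $\iota_{d \otimes e} \circ i = \iota_d \times \iota_e$, Künneth for external pushforward, and compatibility of $\delta^*$ with the preceding operations.

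The main technical obstacle is establishing the derived proper base change in the middle Cartesian square at the level of $T$-equivariant singularity categories. This is where the hypothesis $R(d)^{T(d)\times T} \subseteq \{\text{Tr}(W_d) = 0\}$ enters: in concert with the equivariant localization theorem (Theorem \ref{loc}) and the non-zero-divisor principle (Proposition \ref{zerodiv}), it guarantees the Tor-independence needed so that $\iota_{d,e*} q_0^* = q^* \iota_{d \otimes e *}$ holds equivariantly in $K$-theory. Once this compatibility is secured, the remaining steps are formal diagram chases.
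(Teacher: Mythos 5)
Your multiplicativity step is essentially the paper's argument: the square relating $\X(d,e)$ and $\X(d)\times\X(e)$ and their zero fibers is Cartesian, and since the potential on $\X(d,e)$ is $q^*(W_d+W_e)$ with $q$ a (smooth) affine bundle, ordinary proper/flat base change gives $\iota_{d,e*}q_0^*=q^*\iota_{(d,e)*}$ with no extra input. In particular your claim that the hypothesis $R(d)^{T(d)\times T}\subset \{\operatorname{Tr}(W_d)=0\}$ is needed there ``to guarantee Tor-independence'' is a misattribution: the fixed-locus condition plays no role in that diagram, and conversely your proof never uses it where it is actually indispensable, which should have been a warning sign.

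The genuine gap is in the well-definedness step. You correctly note that $\iota_{d*}[\mathcal{O}_{\X(d)_0}]=0$ (Koszul resolution, $T$-invariance of $W$), so by the projection formula $\iota_{d*}$ kills every class pulled back from $K_0^T(\X(d))$. But to conclude that $\iota_{d*}$ kills \emph{all} of the image of $K_0^T(\X(d)_0)$ in $G_0^T(\X(d)_0)$ you assert that $\iota_d^*$ is surjective on perfect K-theory, ``a consequence of the derived identification of $\operatorname{Perf}(\X(d)_0)$''. This is not justified and is not true in general: $\operatorname{Perf}(\X(d)_0)$ is the thick (in particular idempotent-complete) closure of the free objects, so $K_0^T(\X(d)_0)$ is generated by classes of retracts of complexes of pulled-back bundles, and there is no reason such classes lie in the image of $\iota_d^*$; nothing about the derived structure of the zero fiber forces this. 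The paper argues in the opposite direction: using the hypothesis that the $T(d)\times T$-fixed locus lies in the zero fiber, together with Proposition \ref{injBG} and Theorem \ref{loc} (Takeda localization) and the injectivity of $K_0^T(\X(d))\to K_0^T(\X(d))_{\mathcal{I}}$, one shows that $\iota_d^*:K_0^T(\X(d))\to K_0^T(\X(d)_0)$ is \emph{injective}; then for $F$ perfect on $\X(d)_0$ the triangle $F[1]\to\iota_d^*\iota_{d*}F\to F\xrightarrow{[1]}$ of \cite{hlp} gives $\iota_d^*\iota_{d*}[F]=0$, hence $\iota_{d*}[F]=0$, so the map factors through $K_0^T\left(D_{\text{sg}}(\X(d)_0)\right)$. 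Replacing your surjectivity claim by this injectivity argument (which is exactly where the fixed-locus hypothesis enters) repairs the proof; as written, the proposal has a hole at its central point.
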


\begin{proof}
First, the morphism $\iota_{d*}: D^b_T(\X(d)_0)\to D^b_T(\X(d))$ commutes with the maps used in the definition of multiplication. The attracting maps for the category of singularities are induced from
\[(\X(a)\times\X(b))_0\xleftarrow{q}\X(a,b)_0\xrightarrow{p}\X(d)_0.\]
Let $a,b,d\in\mathbb{N}^I$ with $d=a+b$.
For this, we need to check that the following diagram is commutative:
\begin{equation}\label{diagiota}
\begin{tikzcd}
D^b_T(\X(a)_0)\boxtimes D^b_T(\X(b)_0) \arrow{d}{\iota_{a*}\boxtimes \iota_{b*}}\arrow{r}{i_*p_*q^*} & D^b_T(\X(d)_0) \arrow{d}{\iota_{d*}}\\
D^b_T(\X(a))\boxtimes D^b_T(\X(b)) \arrow{r}{p_*q^*} & D^b_T(\X(d)),
\end{tikzcd}
\end{equation}
where $i:\X(a)_0\times \X(b)_0\to \X(d)_0$. Recall that $i_*$ is the Thom-Sebastiani functor.
It is enough to show that the following diagram commutes:
\begin{equation*}
\begin{tikzcd}
D^b_T\big((\X(a)\times\X(b))_0\big) \arrow{d}{\iota_{d*}} \arrow{r}{q^*} & D^b_T(\X(a,b)_0) \arrow{r}{p_*} \arrow{d}{\iota_{d*}} & D^b_T(\X(d)_0) \arrow{d}{\iota_{d*}}\\
D^b_T\big(\X(a)\times\X(b)\big) \arrow{r}{q^*} & D^b_T(\X(a,b)) \arrow{r}{p_*} & D^b_T(\X(d)).
\end{tikzcd}
\end{equation*}
The left corner commutes from proper base change. The right corner clearly commutes.

We next show that $\iota_{d*}\, K^T_0(\X(d)_0)=0$. %By Proposition \ref{injBG} for a generic cocharacter for $T(d)\times T$ and 
By Propositions \ref{injBG} and \ref{loc}, the restriction map is an isomorphism
\[K_0^T(\X(d))_{\mathcal{I}}\xrightarrow{\sim} K_0^{T(d)\times T}\left(R(d)^{T(d)\times T}\right)_{\mathcal{I}},\]
where ${\mathcal{I}}$ is the set in Proposition \ref{loc}. 
The map $K_0^T(\X(d))\hookrightarrow K_0^T(\X(d))_{\mathcal{I}}$ is injective. By the assumption on $(Q,W)$ and $T$, the map $R(d)^{T\times T(d)}\big/T(d)\hookrightarrow \X(d)$ factors through
\[R(d)^{T\times T(d)}\big/T(d)\hookrightarrow \X(d)_0\hookrightarrow \X(d).\]
The following map is thus injective
\[\iota_d^*: K_0^T(\X(d))\hookrightarrow K_0^T(\X(d)_0).\] For any complex $F$ in $\text{Perf}\,(\X(d)_0)$, we have an exact triangle  \[F[1]\to\iota_{d}^*\iota_{d*}(F)\to F\xrightarrow{[1]},\]
see \cite[Remark 1.8]{hlp}, so $\iota_d^*\iota_{d*}(F)=0$ in $K_0^T(\X(d)_0)$. 
Thus the map \[\iota_{d*}: G^T_0(\X(d)_0)\to K^T_0(\X(d))\] factors through $$K^T_0\left(D_{\text{sg}}(\X(d)_0)\right)\to K_0^T(\X(d)).$$ 
The induced maps $K^T_0\left(D_{\text{sg}}(\X(d)_0)\right)\to K^T_0(\X(d))$ respect multiplication by \eqref{diagiota}, so
$$\iota_{d*}: \text{KHA}_T(Q,W)\to \text{KHA}_T(Q,0)$$ is an algebra morphism.

\end{proof}

\textbf{Remark.} Let $Q$ be a quiver. Consider the tripled quiver $\left(\widetilde{Q}, \widetilde{W}\right)$. Using \eqref{isogra}, \eqref{compKHA}, and Proposition \ref{defo}, we obtain an algebra morphism from the preprojective KHA of $Q$ to a shuffle algebra
\[\text{KHA}_T(Q)\cong \text{KHA}^{\text{gr}}_T\left(\widetilde{Q}, \widetilde{W}\right)\cong \text{KHA}_T\left(\widetilde{Q}, \widetilde{W}\right)
\to \text{KHA}_T\left(\widetilde{Q}, 0\right).\]
This allows for explicit computations in preprojective KHAs in conjunction with generation results and allows for checking Conjecture \ref{conjecture} in particular cases. The most general result in this direction is due to Varagnolo--Vasserot \cite{VV}, who checked the conjecture for finite and affine type quiver except $A_1^{(1)}$.

\section{Representations of KHA}\label{7}

\subsection{} 
Let $(Q',W')$ be a quiver with potential, let $T$ be a torus as in Section \ref{multi}, and let $\theta'\in \mathbb{Q}^{I'}$ be a stability condition for $Q'$. Denote by $\tau$ the slope function. 
Let $Q\subset Q'$ be a subquiver with $I'\setminus I=\{ \infty \}$ and denote by
\begin{align*}
    W&:=W'|_{Q},\\
    \theta&:=\theta'|_Q.
\end{align*}
Fix $\mu$ a slope for $(Q, \theta)$. Let $d\in\Lambda_{\mu}$ be a dimension vector of $Q$. 
For any dimension vector $d\in \mathbb{N}^I$, consider the dimension vectors \begin{align*}
   \widetilde{d}&:=(1,d),\\
   d^o&:=(0,d)
\end{align*} of $Q'$, where we identify $\mathbb{N}\times \mathbb{N}^I\cong \mathbb{N}^{I'}$. We denote by $\X$ the moduli stacks for $Q'$. Then $\X\left(d^o\right)$ is the moduli stack of representations of $Q$. We say that $\left(Q', Q, \theta', \mu\right)$ satisfies \textbf{Assumption B} if 
\[\theta'(\infty)=\mu+\varepsilon \] for $0<\varepsilon\ll 1$. Then, for any $d\in\Lambda_{\mu}$, we have that
\[\mu=\tau\left(d^o\right)<\tau\left(\widetilde{d}\right)=\mu+\varepsilon'\] for $0<\varepsilon'\ll 1$.
Let $d,e\in\Lambda_{\mu}$.
Denote by $\X\left(d^o, \widetilde{e}\right)^{\text{ss}}$ the stack of pairs $A\subset B$ such that $A$ is $\theta$-semistable of dimension $d^o$ and $B$ is $\theta'$-semistable of dimension $\widetilde{d+e}$. Then
$B/A$ is $\theta'$-semistable of dimension $\widetilde{e}$ and $A$ is $\theta'$-semistable. %Recall from Subsection \ref{framed} that $\theta'=\theta+\varepsilon$ for $0<\epsilon\ll 1$, so $\tau\left(\widetilde{d}\right)=\mu+\varepsilon$. 
There are maps
\begin{align*}
    t_{d,e}&:\X\left(d^o, \widetilde{e}\right)^{\text{ss}}\to \X\left( d^o\right)^{\text{ss}}\times \X\left( \widetilde{e}\right)^{\text{ss}},\\
    s_{d,e}&:\X\left(d^o, \widetilde{e}\right)^{\text{ss}}\to \X\left( \widetilde{d+e}\right)^{\text{ss}}.
    \end{align*}
    The map $t_{d,e}$ is an affine bundle map and the map $s_{d,e}$ is proper, see also the discussion in \cite[Subsection 4.1]{s}. 
    Denote by     
\[\text{R}_T\left(Q',W'\right)_{\mu}:=\bigoplus_{d\in\Lambda_\mu} \text{MF}^{\text{gr}}_T\left(\X\left(\widetilde{d}\right), W\right).\]    
We denote its Grothendieck group by $\text{KR}_T\left(Q',W'\right)_{\mu}$.

\begin{prop}\label{repres}
In the above framework and under Assumption B, $\text{KHA}_T(Q,W)_\mu$ naturally acts on $\text{KR}_T\left(Q',W'\right)_{\mu}$ via the functors
\begin{multline}
p_{d,e*}q_{d,e}^*\,\text{TS}\,\delta: \text{MF}_T\left(
\X\left(d^o\right)^{\text{ss}}, W\right)\otimes \text{MF}_T\left(\X\left(\widetilde{e}\right)^{\text{ss}}, W\right)\\\to
\text{MF}_T\left(\X\left(\widetilde{d+e}\right)^{\text{ss}}, W\right)\end{multline}
for $d,e\in\Lambda_{\mu}$. If Assumption A is satisfied, we can consider graded categories $\text{MF}^{\text{gr}}$ and the analogous statement holds in that case as well.
\end{prop}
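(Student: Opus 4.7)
The plan is to mimic the proof of Theorem \ref{thm1} using a ``triple stack'' parametrizing three-step filtrations, the third piece being framed. Concretely, for $d,e,f \in \Lambda_\mu$ I would introduce the stack $\X(d^o, e^o, \widetilde{f})^{\text{ss}}$ of filtrations $A \subset B \subset C$ with $A$ of dimension $d^o$ and $\theta$-semistable, $B/A$ of dimension $e^o$ and $\theta$-semistable, and $C/B$ of dimension $\widetilde{f}$ and $\theta'$-semistable. Assumption B forces $\tau(A) = \tau(B/A) = \mu < \mu+\varepsilon' = \tau(\widetilde{f})$, which guarantees that $B$ itself is $\theta$-semistable of slope $\mu$ and $C$ is $\theta'$-semistable of slope $\mu+\varepsilon'$, so all the intermediate stacks that appear in the diagram sit naturally inside the semistable loci used in the action and multiplication functors.

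First I would check that $p_{d,e\,*} q_{d,e}^* \, \text{TS}\, \delta^*$ is well-defined on matrix factorizations: this uses Subsection \ref{functo} together with the fact that $s_{d,e}$ is proper and $t_{d,e}$ is an affine bundle (noted right after the definition of these maps), and the compatibility of potentials $s_{d,e}^* W_{\widetilde{d+e}} = t_{d,e}^*(W_{d^o} \boxplus W_{\widetilde{e}})$, which reduces the Thom--Sebastiani factor to the usual proper-pushforward/flat-pullback functors on matrix factorization categories.

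The associativity statement then amounts to commutativity of a $3\times 3$ diagram analogous to \eqref{diagr1}, with upper-right corner
\[
\begin{tikzcd}[column sep=small]
\X(d^o,e^o,\widetilde{f})^{\text{ss}} \ar[r] \ar[d] & \X(d^o,\widetilde{e+f})^{\text{ss}} \ar[d] \\
\X((d+e)^o,\widetilde{f})^{\text{ss}} \ar[r] & \X(\widetilde{d+e+f})^{\text{ss}}.
\end{tikzcd}
\]
The two outer squares are cartesian, so proper base change (applied in the category of matrix factorizations, as in Subsection \ref{functo}) handles the pushforward/pullback commutations; the remaining corners, which mix $q^*$ with $\delta^*\,\text{TS}$, follow from the triple/double base-change argument in diagram \eqref{diagr} of Theorem \ref{thm1}, with the only change being that the last factor is now the framed stack $\X(\widetilde{f})$ instead of $\X(f)$. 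Passing to Grothendieck groups gives the claimed action of $\text{KHA}_T(Q,W)_\mu$ on $\text{KR}_T(Q',W')_\mu$.

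The main obstacle I expect is verifying that Assumption B really does what is needed to ensure that every intermediate filtration arising in $\X(d^o, e^o, \widetilde{f})^{\text{ss}}$ lands inside the appropriate semistable locus (so that restriction to semistables commutes with the correspondence maps, and no ``boundary'' terms appear). Once this compatibility of stability conditions is pinned down, the rest is a routine base-change/Thom--Sebastiani bookkeeping exactly parallel to Theorem \ref{thm1}. The graded (Assumption A) case goes through verbatim using $\text{MF}^{\text{gr}}$ and the graded versions of all pullback/pushforward/Thom--Sebastiani functors recalled in Subsections \ref{singul}--\ref{TSiso}.
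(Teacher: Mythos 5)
Your proposal is correct and follows essentially the same route as the paper: the paper's proof of Proposition \ref{repres} is literally ``Same proof as in Theorem \ref{thm1} works here,'' and your argument is exactly that proof transported to the framed setting, with the triple stack of filtrations (framed piece on top), the base-change squares as in diagram \eqref{diagr1}, and Assumption B guaranteeing compatibility with the semistable loci. No gaps; the extra detail you supply about stability of the intermediate filtration steps is the only point the paper leaves implicit.
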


\begin{proof}
Same proof as in Theorem \ref{thm1} works here.
\end{proof}

\subsection{} 

Let $Q_3$ be the quiver with one vertex $0$ and three loops $x,y,z,$ consider the potential $W_3=xyz-xzy$, and consider the zero stability condition.
Let $Q_3^f=\left(I^f, E^f\right)$ be the quiver with $I^f=\{0, \infty\}$ and $E^f=\{x,y,z,t\}$. Consider the stability condition $\theta$ of $Q^f$ from Subsection \ref{framed}. For $d\in\mathbb{N}$, denote by $\widetilde{d}=(1,d)$ a dimension vector of $Q^f$. Assumption B is satisfied in this case by the construction of $\theta$. Let $T\subset \left(\mathbb{C}^*\right)^3$ be a torus which fixes $W$.
Consider the regular function
\[\text{Tr}\left(W_{3,\widetilde{d}}\right): \X\left(\widetilde{d}\right)^{\text{ss}}\to\mathbb{A}^1_{\mathbb{C}}.\]
Its critical locus is $\text{Hilb}\left(\C^3,d\right)$. Use the notation \[K_{\text{crit}, T}\left(\text{Hilb}\left(\mathbb{A}^3_{\mathbb{C}}, d\right)\right):=K_0\left(\text{MF}_T\left(\X\left(\widetilde{d}\right)^{\text{ss}}, W_{3,\widetilde{d}}\right)\right).\]
Proposition \ref{repres} constructs an action of $\text{KHA}_T\left(Q_3,W_3\right)$ on $$\bigoplus_{d\geq 0}K_{\text{crit}, T}\left(\text{Hilb}\left(\mathbb{A}^3_{\mathbb{C}}, d\right)\right).$$

\subsection{} 
We explain how Proposition \ref{repres} can be used to constructs representations on $K$-theory of Nakajima quiver varieties following the analogous construction in cohomology \cite[Section 6.3.]{d2}. 
Given a quiver $Q=(I,E)$ and $f\in\mathbb{N}^I$, denote by $Q^f$ the framed quiver with vertices $I\cup\{\infty\}$ and $f_i$ new edges from $\infty$ to $i\in I$. 
Recall from Subsection \ref{tripledef} the construction of the double quiver $\overline{Q}$ and tripled quiver with potential $\left(\widetilde{Q}, \widetilde{W}\right)$. 
Let $\theta=0$ be the zero stability condition for $Q$. Consider the stability condition $\theta^f$
defined in Subsection \ref{framed} for the quiver $Q^f$. 

Define the torus $T'\cong \left(\mathbb{C}^*\right)^E$, where for $e\in E$ the corresponding $\mathbb{C}^*$ acts with weight $1$ on the linear map corresponding to $e$, with weight $-1$ on the linear map corresponding to $\overline{e}$, and with weight $0$ on $\omega_i$ for any $i\in I$. We consider a torus $T\subset T'$.

For $d\in\mathbb{N}^I$, denote by $\widetilde{d}=(1,d)\in\mathbb{N}\times \mathbb{N}^I$. Consider the map that forgets the action of $\omega_i$: 
$$\pi: \X\left(\widetilde{Q^f},\widetilde{d}\right)\to \X\left(\overline{Q^f},\widetilde{d}\right)$$ the map that forgets the action of $\omega_i$. Observe that the doubled and tripled quivers considered above are for $Q^f$.
Ben Davison showed in \cite[Lemma 6.5.]{d2} that the inclusion
\[\pi^{-1}\left(\X\left(\overline{Q^f},\widetilde{d}\right)^{\text{ss}}\right)\hookrightarrow \X\left(\widetilde{Q^f},\widetilde{d}\right)^{\text{ss}}\] induces an equality
\begin{equation}\label{davi}
\pi^{-1}\left(\X\left(\overline{Q^f},\widetilde{d}\right)^{\text{ss}}\right)\cap \text{crit}\,\left(\text{Tr}\,\widetilde{W^f}\right)=
\X\left(\widetilde{Q^f},\widetilde{d}\right)^{\text{ss}}\cap \text{crit}\,\left(\text{Tr}\,\widetilde{W^f}\right).\end{equation}
We consider the extra $\mathbb{C}^*$-action induced by acting with weight $2$ on the linear maps corresponding to $\omega_i$ for $i\in I$ and with weight $0$ on the other linear maps. We consider $\text{MF}^{\text{gr}}$ with respect to this $\mathbb{C}^*$-action.
The equality \eqref{davi} implies that
$$\text{MF}^{\text{gr}}\left(\X\left(\widetilde{Q^f},\widetilde{d}\right)^{\text{ss}}, W\right)\cong \text{MF}^{\text{gr}}\left(\pi^{-1}\left(\X\left(\overline{Q^f},\widetilde{d}\right)^{\text{ss}}\right), W\right).$$
Consider the stack 
$\mathcal{Y}:=\X\left(\overline{Q^f},\widetilde{d}\right)^{\text{ss}}$, the $\mathfrak{g}\left(\widetilde{d}\right)$-vector bundle $\pi^{-1}\left(\mathcal{Y}\right)$, and the regular function $\text{Tr}\,W_{\widetilde{d}}$. We also denote the vector bundle by $\mathfrak{g}\left(\widetilde{d}\right)$. The potential is constructed as in Subsection \ref{dimred0} via the section $s\in \Gamma\left(\mathcal{Y}, \mathfrak{g}(d)\right)$ corresponding to the natural moment map
\[s: T^*R\left(\widetilde{d}\right)\cong 
\overline{R\left(\widetilde{d}\right)}\to \mathfrak{g}\left(\widetilde{d}\right).\] It induces a map $\partial: \mathfrak{g}\left(\widetilde{d}\right)\to \mathcal{O}_{\overline{R\left(\widetilde{d}\right)}}$.
The diagonal $\mathbb{C}^*\hookrightarrow G\left(\widetilde{d}\right)$ acts trivially on $\overline{R\left(\widetilde{d}\right)}$ and $G\left(\widetilde{d}\right)\big/\mathbb{C}^*\cong G(d)$.
Define \[\mathfrak{P}\left(\widetilde{d}\right):=\text{Spec}\left(\mathcal{O}_{\overline{R\left(\widetilde{d}\right)}}\left[\mathfrak{g}(d)^{\vee}[1];\partial\right]\right)\Big\slash G\left(d\right).\]
Using dimensional reduction, see Subsections \ref{dimred0} and \ref{Koszuls}, we have that:
\[\text{MF}^{\text{gr}}_T\left(\pi^{-1}\left(\mathcal{Y}\right), W\right)\cong D^b_T\left(\mathfrak{P}\left(\widetilde{d}\right)^{\text{ss}}\right).\]
The moment map equations for $i\in I$ are the relations for the Nakajima quiver varieties $N(f,d)$ and the moment map equation for $\infty$ is superfluous \cite[page 33]{d2}. This means that $\mathfrak{P}\left(\widetilde{d}\right)^{\text{ss}}\cong N(f,d)$. 

Assumption B is satisfied for $\left(\widetilde{Q^f}, \widetilde{Q}, \theta^f, 0\right)$. By Proposition \ref{repres} and by the discussion in Subsection \ref{compHA}, the algebra $\text{KHA}_T(Q)\cong \text{KHA}_T^{\text{gr}}\left(\widetilde{Q}, \widetilde{W}\right)$ acts on  $$\bigoplus_{d\in \mathbb{N}^I}
K_0^T\left(N(f,d)\right).$$

%\begin{proof}Using dimensional reduction \cite{i} or Theorem \ref{dimred0}, and Proposition \ref{davi}, we have that:$$D_{sg}\left(\left(\bar{\X}(f,d)^{\theta-ss}\times\prod \mathfrak{gl}(d_i)\right)_0\right)\cong \text{Coh}(\mathcal{N}^{ss}(f,d)).$$  Proposition \ref{rep} implies the conclusion.\end{proof}

\subsection{} Let $J$ be the Jordan quiver. Its tripled quiver is $(Q_3,W_3)$. Consider $T\subset \left(\mathbb{C}^*\right)^3$ which fixes $W_3$. We obtain an action of $\text{KHA}_T(J)\cong \text{KHA}_T^{\text{gr}}\left(Q_3,W_3\right)$ on
\begin{equation}\label{nakajima}
\bigoplus_{d\geq 0} K^T_0\left(\text{Hilb}\left(\mathbb{A}^2_{\mathbb{C}},d\right)\right).
\end{equation}
The algebra $\text{KHA}_T(J)$ has a subalgebra isomorphic to  $U_{q, t}^{>}\left(\widehat{\widehat{\mathfrak{gl}_1}}\right)$ \cite{sv1}, \cite{ne1}. In \cite{ft}, \cite{sv1}, the authors construct a representations of the full quantum group $U_{q, t}\left(\widehat{\widehat{\mathfrak{gl}_1}}\right)$ on the vector space \eqref{nakajima}.

\section{The wall-crossing theorem}\label{4}
\subsection{}
Let $\theta\in \mathbb{Q}^I$ be a stability condition for $Q$ and define the slope $$\tau(d)=\frac{\sum_{i\in I} \theta^id^i}{\sum_{i\in I} d^i}.$$
For any partition $\dd=(d_1,\cdots, d_k)$ of $d$ with $d_i\in\mathbb{N}^I$, consider the maps:
\[\times_{i=1}^k\X(d_i)\xleftarrow{q_{\dd}}\X(d_1,\cdots, d_k)\xrightarrow{p_{\dd}}\X(d).\]
The stack $\X(d)$ has a Harder-Narasimhan stratification with strata 
$$p_{\dd}\left(q_{\dd}^{-1}\left(\times_{i=1}^k\X(d_i)^{\text{ss}}\right)\right)=:\X'(\underline{d})$$ 
corresponding to partitions $\dd=(d_1,\cdots,d_k)$ with $k\geq 2$, $d_i$ non-zero for $1\leq i\leq k$, and $\mu_1>\cdots>\mu_k$ where $\mu_i:=\tau(d_i)$.

Let $I$ be the set of such partitions. Consider two partitions $\underline{d}=(d_1,\cdots, d_k)$ and $\underline{e}=(e_1,\cdots, e_s)$ in $I$. We say that $\dd<\ee$ if $k>s$. Then $\X'(\underline{d})$ with $\underline{d}\in I$ are a stratification as in Subsection \ref{KNstrata}. 

\subsubsection{}
For $w\in\mathbb{Z}$, let $D^b(\X(d))_w$ be the subcategory of $D^b(\X(d))$ of complexes on which the diagonal cocharacter $\lambda:=z\cdot\text{Id}$ of $G(d)$ acts with weight $w$.
The category $D^b(\X(d))$ has an orthogonal decomposition in categories $D^b(\X(d))_w$ for $w\in\mathbb{Z}$. 

More generally, if $\X$ is a stack and $T$ is a torus acting trivially on $\X$, there is an orthogonal decomposition of $D^b_T(\X)$ in categories $D^b_T(\X)_\chi$, where $\chi$ is a weight of $T$ and $D^b_T(\X)_\chi$ is the subcategory of sheaves on $D^b_T(\X)$ on which $T$ acts with weight $\chi$.

\subsubsection{}
As a corollary of \cite[Theorem 2.10, Amplification 2.11]{hl}, see the semiorthogonal decomposition \eqref{hl}, we have that:

\begin{prop}\label{wallcros}
The category $D^b(\X(d))$ has a semiorthogonal decomposition with summands  $$p_{\dd*}q_{\dd}^*\left( D^b\left(\times_{i=1}^k\X(d_i)^{\text{ss}}\right)_\chi\right)\cong D^b\left(\times_{i=1}^k\X(d_i)^{\text{ss}}\right)_\chi,$$  where $\dd\in I$ and $\chi=(w_1,\cdots,w_k)$ is a weight of $(\mathbb{C}^*)^k$. The analogous decomposition holds for $D^b_{\mathbb{C}^*}(\X(d))$ where $\mathbb{C}^*$ is a subgroup of $\left(\mathbb{C}^*\right)^E$.
\end{prop}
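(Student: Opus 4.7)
My strategy is to recognize the Harder--Narasimhan stratification $\{\X'(\dd)\}_{\dd\in I}$ as a Kempf--Ness stratification in the sense of Subsection \ref{window}, apply the semiorthogonal decomposition \eqref{hl} of Halpern--Leistner iteratively to every stratum, and then refine the single-weight decomposition of the resulting side pieces into the finer $(\C^*)^k$-weight decomposition that appears in the proposition.

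For each $\dd=(d_1,\ldots,d_k)\in I$, I would first exhibit a cocharacter $\lambda_{\dd}:\C^*\to G(d)$ whose fixed locus in $\X(d)$ is $\X(d_1)\times\cdots\times\X(d_k)$ and whose attracting locus is the stack $\X(d_1,\ldots,d_k)$ of filtered representations; concretely, $\lambda_{\dd}$ acts with strictly decreasing integer weights on the graded pieces of the filtration, chosen compatibly with the slopes $\mu_1>\cdots>\mu_k$. The map $p_{\dd}$ is a closed immersion onto $\X'(\dd)$ after restricting to the open subset where each graded piece is $\theta$-semistable of slope $\mu_i$. Existence and uniqueness of HN filtrations guarantee that these strata, ordered by $\dd<\ee\Leftrightarrow k>s$, cover the unstable locus and provide precisely the setup of Subsubsection \ref{KNstrata} with semistable open part $\X(d)^{\text{ss}}$.

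Next I would iterate \eqref{hl}, once per stratum in the chosen order. A single application at window parameter $w_{\dd}$ carves off the weight pieces $p_{\dd*}q_{\dd}^* D^b(\times_{i=1}^k \X(d_i)^{\text{ss}})_v$ for integers $v$ outside the window range, while the window itself is equivalent to $D^b$ of the complement of the stratum. Letting the parameters $w_{\dd}$ range so that every integer weight is eventually peeled off produces a semiorthogonal decomposition of $D^b(\X(d))$ whose summands are $p_{\dd*}q_{\dd}^* D^b(\times_{i=1}^k \X(d_i)^{\text{ss}})_v$ for all $\dd\in I$ and $v\in\Z$, together with $D^b(\X(d)^{\text{ss}})$ in the innermost window; the equivalence $p_{\dd*}q_{\dd}^* F\cong F$ on each weight piece is Amplification~2.11 of \cite{hl}. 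To refine each summand from a single weight $v$ to characters $\chi=(w_1,\ldots,w_k)$ of $(\C^*)^k$, I would apply the orthogonal decomposition recalled just before the proposition to the central torus $(\C^*)^k\subset \prod_i G(d_i)$, which acts trivially on $\prod_i \X(d_i)^{\text{ss}}$; this splits each $D^b$-summand into its $\chi$-graded pieces $D^b(\times_{i=1}^k \X(d_i)^{\text{ss}})_\chi$, yielding the stated decomposition. The $\C^*$-equivariant statement is identical because any $\C^*\subset(\C^*)^E$ commutes with $G(d)$ and preserves every stratum and every cocharacter $\lambda_{\dd}$.

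I expect the main obstacle to be the weight bookkeeping: one must check that the HL pieces from the single cocharacter $\lambda_{\dd}$ split as direct sums of $(\C^*)^k$-weight pieces compatibly with the semiorthogonal structure, and that the collection of summands obtained from different strata does not overlap. Both issues reduce to a linear-algebra check on the Levi $\prod_i G(d_i)$, using that each $\lambda_{\dd}$ distinguishes its own stratum by the characteristic pattern of weights it induces on $\prod_i \X(d_i)^{\text{ss}}$.
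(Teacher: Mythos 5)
Your proposal is correct and takes essentially the same route as the paper: the paper also treats the Harder--Narasimhan strata as a stratification in the sense of Subsection \ref{KNstrata} and deduces the proposition directly from Halpern-Leistner's decomposition \eqref{hl}, refined by the orthogonal decomposition under the trivially acting central torus $\left(\C^*\right)^k$ recalled just before the statement. The only cosmetic difference is that you describe letting the window parameters $w_{\dd}$ vary to peel off all weights, whereas \eqref{hl} already contains every weight piece ($v_i<w_i$ on one side, $t_i\geq w_i$ on the other) for a single fixed choice of $w$, so no iteration over parameters is needed.
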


Using Propositions \ref{prop1} and \ref{prop2}, we obtain the following corollary:

\begin{cor}\label{wccor}
(a) The category $\text{MF}\left(\X(d), W\right)$ has a semiorthogonal decomposition with summands  $$p_{\dd*}q_{\dd}^*\left( \text{MF}\left(\times_{i=1}^k\X^{\text{ss}}(d_i), W\right)_{\chi}\right)\cong \text{MF}\left(\times_{i=1}^k\X^{\text{ss}}(d_i), W\right)_{\chi},$$  where $\dd\in I$ and $\chi=(w_1,\cdots, w_k)$ is a weight of $\left(\mathbb{C}^*\right)^k$.

(b) Assume that $(Q,W)$ satisfies Assumption A and consider the corresponding $\text{MF}^{\text{gr}}$.
Then the category $\text{MF}^{\text{gr}}\left(\X(d), W\right)$ has a semiorthogonal decomposition with summands $$p_{\dd*}q_{\dd}^*\left( \text{MF}^{\text{gr}}\left(\times_{i=1}^k\X(d_i)^{\text{ss}}, W\right)_{\chi}
\right)\cong \text{MF}^{\text{gr}}\left(\times_{i=1}^k\X(d_i)^{\text{ss}}, W\right)_{\chi},$$  where $\dd\in I$ and $\chi=(w_1,\cdots, w_k)$ is a weight of $\left(\mathbb{C}^*\right)^k$.
\end{cor}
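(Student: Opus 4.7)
The plan is to combine the semiorthogonal decomposition of $D^b(\X(d))$ provided by Proposition \ref{wallcros} with the lifting result of Proposition \ref{prop1} (respectively Proposition \ref{prop2} in the graded case), which transfers a semiorthogonal decomposition of the ambient derived category of a smooth quotient stack to the corresponding decomposition of the category of matrix factorizations with respect to any regular function.

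First I would choose any total refinement of the partial order on $I$, together with any total ordering on the characters $\chi$ of $(\mathbb{C}^*)^k$, so that Proposition \ref{wallcros} produces a linearly ordered semiorthogonal decomposition of $D^b(\X(d))$ with summands $\A_{\dd,\chi}:=p_{\dd*}q_{\dd}^* D^b(\times_{i=1}^k \X(d_i)^{\text{ss}})_{\chi}$. Directly applying Proposition \ref{prop1} then yields a semiorthogonal decomposition
\[
\text{MF}(\X(d), W) = \big\langle \text{MF}(\A_{\dd,\chi}, W) \big\rangle_{\dd,\chi},
\]
where $\text{MF}(\A_{\dd,\chi}, W)$ is by definition the full subcategory of matrix factorizations whose underlying complex lies in $\A_{\dd,\chi}$.

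The remaining task is to identify each summand $\text{MF}(\A_{\dd,\chi}, W)$ with $\text{MF}(\times_{i=1}^k \X(d_i)^{\text{ss}}, W)_{\chi}$. For this I would use the compatibility of potentials under the correspondence
\[
\times_{i=1}^k \X(d_i)^{\text{ss}} \xleftarrow{q_{\dd}} \X(d_1, \ldots, d_k) \xrightarrow{p_{\dd}} \X(d),
\]
namely $p_{\dd}^*\text{Tr}(W_d)=q_{\dd}^*\big(\text{Tr}(W_{d_1})+\cdots+\text{Tr}(W_{d_k})\big)$, together with the functoriality of matrix factorization categories recalled in Subsection \ref{functo}. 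Full faithfulness of $p_{\dd*}q_{\dd}^*$ on the relevant subcategory of $D^b$, built into the Halpern-Leistner decomposition \eqref{hl}, lifts to full faithfulness at the level of $\text{MF}$ since the matrix factorization differential is transported canonically along these functors; the $\chi$-weight constraint is preserved because $q_{\dd}^*$ and $p_{\dd*}$ are equivariant for the central torus by which $\chi$ is measured.

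The main obstacle will be this last identification of summands; once it is established, the semiorthogonality follows automatically from Proposition \ref{prop1}. Part (b) is proved in exactly the same way under Assumption A: the extra $\mathbb{C}^*$-action commutes with the $G(d)$-action and so preserves the HN stratification together with its attracting and fixed loci, hence the graded version of Proposition \ref{wallcros} holds and Proposition \ref{prop2} yields the graded analog of the semiorthogonal decomposition without further modification.
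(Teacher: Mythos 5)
Your proposal is correct and follows essentially the same route as the paper: apply Proposition \ref{prop1} (resp.\ \ref{prop2}) to the semiorthogonal decomposition of $D^b(\X(d))$ from Proposition \ref{wallcros}, and then identify each summand $\text{MF}\left(p_{\dd*}q_{\dd}^*\,\textbf{C}, W_d\right)\cong p_{\dd*}q_{\dd}^*\,\text{MF}\left(\textbf{C}, \oplus_i W_{d_i}\right)$ using the compatibility $p_{\dd}^*\text{Tr}(W_d)=q_{\dd}^*\left(\sum_i \text{Tr}(W_{d_i})\right)$ and the functoriality of matrix factorizations. The identification step you flag as the main obstacle is exactly the short chain of isomorphisms the paper writes down, so no essential gap remains.
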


\begin{proof}
Let $\dd\in I$ and let $\chi$ be a weight as above.
Let $\textbf{C}$ be a subcategory of $D^b\left(\times_{i=1}^k\X(d_i)\right)_{\chi}$ on which $p_{\dd*}q_{\dd}^*$ is fully faithful. Then 
\[\text{MF}\left(p_{\dd*}q_{\dd}^*\,\textbf{C}, W_d\right)\cong p_{\dd*}\text{MF}\left(q^*_{\dd}\,\textbf{C}, p_{\dd}^*W_d\right)\cong p_{\dd*}q_{\dd}^*\,\text{MF}\left(\textbf{C}, \oplus_{i=1}^kW_{d_i}\right).\] The statement in part (a) follows from Proposition \ref{prop1}. The statement for $\text{MF}^{\text{gr}}$ follows similarly using Proposition \ref{prop2}.
\end{proof}

%Define a filtration $F_{\dd}\subset K_0(D^{id}_{sg}(\X(d)_0))$ by $$F_{\dd}=\text{image of }p_{\dd*}q_{\dd}^*K_0(D^{id}_{sg}(\X(d_1)_0))\times\cdots\times K_0(D^{id}_{sg}(\X(d_k)_0)).$$ $\text{max}(\theta(e_i))<\text{max}(\theta(d_i))$. The graded pieces with respect to this filtration are:$$ F_{\dd}/\bigcup_{\ee>\dd} F_{\ee}=K_0(D^{id}_{sg}(\X(d_1)^{ss}_0))\times\cdots\times K_0(D^{id}_{sg}(\X(d_k)^{ss}_0)).$$ 

\subsubsection{}\label{kunn} We say that $(Q,W)$ satisfies \textbf{Assumption C} if for all $d,e\in\mathbb{N}^I$ and all stability conditions $\theta$, the Thom-Sebastiani maps are isomorphisms:
\begin{multline*}
    \text{TS}:K_0\big(\text{MF}(\X(d)^{\text{ss}}, W_d)\big)\otimes K_0\big(\text{MF}(\X(e)^{\text{ss}}, W_e)\big)\xrightarrow{\sim}\\ K_0\big(\text{MF}(\X(d)^{\text{ss}}\times\X(e)^{\text{ss}}, W_d\oplus W_e)\big).
    \end{multline*}
Under Assumption A, we can formulate the analogous assumption for $\text{MF}^{\text{gr}}$, which by \cite[Corollary 3.13]{T2} is the same as the Assumption C above. 
%we say that $(Q,W)$ satisfies \textbf{Assumption D} if for all $d,e\in\mathbb{N}^I$ and all stability conditions $\theta$, the Thom-Sebastiani maps are isomorphisms:\begin{multline*}\text{TS}:K_0\big(\text{MF}^{\text{gr}}(\X(d)^{\text{ss}}, W_d)\big)\otimes K_0\big(\text{MF}^{\text{gr}}(\X(e)^{\text{ss}}, W_e)\big)\xrightarrow{\sim}\\ K_0\big(\text{MF}^{\text{gr}}(\X(d)^{\text{ss}}\times\X(e)^{\text{ss}}, W_d\oplus W_e)\big).\end{multline*}
Any pair $(Q,0)$ satisfies Assumption C. In \cite{P2}, we check that any tripled quiver $\left(\widetilde{Q},\widetilde{W}\right)$ satisfies Assumption C.
%and thus Assumption D by \cite[Corollary 3.13]{T2}.

\begin{thm}\label{wallcrossing}
(a) Assume that $(Q,W)$ satisfies Assumption C. Let $\theta$ be a stability condition.
There is an isomorphism of vector spaces
$$\text{KHA}(Q,W)\xrightarrow{\sim} \bigotimes_{\mu\in\mathbb{Q}} \text{KHA}(Q,W)_{\mu}.$$

(b) Assume that $(Q,W)$ satisfies Assumptions A and C and consider the corresponding $\text{KHA}^{\text{gr}}$. Let $\theta$ be a stability condition.
There is an isomorphism of vector spaces
$$\text{KHA}^{\text{gr}}(Q,W)\xrightarrow{\sim} \bigotimes_{\mu\in\mathbb{Q}} \text{KHA}^{\text{gr}}(Q,W)_{\mu}.$$
\end{thm}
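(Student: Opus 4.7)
The plan is to combine the Harder--Narasimhan semiorthogonal decomposition of Corollary~\ref{wccor} with the K\"unneth hypothesis (Assumption C). I describe the argument for part (a); part (b) is identical after replacing $\text{MF}$ by $\text{MF}^{\text{gr}}$ and invoking Corollary~\ref{wccor}(b).

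Fix $d \in \mathbb{N}^I$. By Corollary~\ref{wccor}(a), $\text{MF}(\X(d), W_d)$ has a semiorthogonal decomposition whose summands are indexed by pairs $(\dd, \chi)$, where $\dd = (d_1, \ldots, d_k)$ runs over ordered partitions of $d$ into nonzero parts with strictly decreasing slopes $\mu_1 > \cdots > \mu_k$ (the case $k = 1$ being the semistable stratum $\X(d)^{\text{ss}}$), and $\chi$ is a weight of the diagonal $(\mathbb{C}^*)^k$. Passing to Grothendieck groups converts this into a direct sum decomposition. Since the weight decomposition by $\chi$ is orthogonal, summing over $\chi$ reassembles the full K-theory of the product, yielding
\[K_0\bigl(\text{MF}(\X(d), W_d)\bigr) \;\cong\; \bigoplus_{\dd} K_0\bigl(\text{MF}(\textstyle\prod_{i=1}^k \X(d_i)^{\text{ss}}, W)\bigr).\]
Assumption C, applied inductively in $k$, then converts each summand into a tensor product:
\[K_0\bigl(\text{MF}(\textstyle\prod_{i=1}^k \X(d_i)^{\text{ss}}, W)\bigr) \;\cong\; \bigotimes_{i=1}^k K_0\bigl(\text{MF}(\X(d_i)^{\text{ss}}, W_{d_i})\bigr).\]

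Summing over all $d \in \mathbb{N}^I$, the combined index set is the collection of finite ordered tuples $(d_1, \ldots, d_k)$ of nonzero dimension vectors with strictly decreasing slopes, together with the empty tuple for $d = 0$. Such a tuple is tautologically the same datum as a finitely supported function $\mu \mapsto d_\mu \in \Lambda_\mu \setminus \{0\}$. Grouping the K\"unneth factors by slope identifies $\text{KHA}(Q,W)$ with the restricted tensor product
\[\bigotimes_{\mu \in \mathbb{Q}} \text{KHA}(Q,W)_\mu,\]
taken with respect to the unit $1 \in K_0\bigl(\text{MF}(\X(0), 0)\bigr) = \mathbb{Z}$ that sits inside the $d = 0$ piece of each $\text{KHA}(Q,W)_\mu$. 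This is the claimed vector space isomorphism.

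All of the geometric content is absorbed into Corollary~\ref{wccor} (HN strata plus Proposition~\ref{prop1}) and into Assumption C; what remains is purely combinatorial. The step requiring care, and what I would regard as the main obstacle, is the bookkeeping in the last paragraph: one must verify that the naive sum over HN tuples of nonzero parts with strictly decreasing slopes matches the restricted infinite tensor product, so that every pure tensor differing from $\bigotimes_\mu 1$ in finitely many slots arises from a unique HN tuple, with the empty tuple providing the vacuum. Once the convention for the $d = 0$ unit is fixed, this is an immediate identification and I do not anticipate any deeper geometric difficulty.
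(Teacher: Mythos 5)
Your proposal is correct and follows essentially the same route as the paper: the Harder--Narasimhan semiorthogonal decomposition of Corollary \ref{wccor}, passage to Grothendieck groups, reassembly of the weight pieces via the orthogonal decomposition by the diagonal $(\mathbb{C}^*)^k$-weight, and Assumption C to convert each stratum into a tensor product of semistable pieces. The only addition is your explicit bookkeeping identifying HN tuples with the restricted tensor product over slopes (and the explicit inclusion of the $k=1$ semistable piece), which the paper leaves implicit.
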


The product on the right hand side on Theorem \ref{wallcrossing} is an ordered product taken after descending slopes, see also its analogue in cohomology \cite[Theorem B]{dm}.

\begin{proof}
Let $d\in\mathbb{N}^I$. We discuss the proof for part (a), part (b) follows from part (a). The statement follows from the isomorphism of vector spaces
\begin{equation}\label{decoKnot}
K_0\left(\text{MF}\left(\X(d), W\right)\right)\cong\bigoplus \bigotimes_{i=1}^k K_0\left(\text{MF}\left(\X(d_i)^{\text{ss}}, W\right)\right),
\end{equation}
where the sum is after all partitions $\underline{d}=(d_1,\cdots,d_k)$ in $I$. For any $d\in\mathbb{N}^I$, there are decompositions
\[K_0\left(\text{MF}\left(\X(d)^{\text{ss}}, W\right)\right)\cong\bigoplus_{w\in\mathbb{Z}}K_0\left(\text{MF}\left(\X(d)^{\text{ss}}, W\right)_w\right).\]
The decomposition \eqref{decoKnot} now follows from Corollary \ref{wccor} and the isomorphism from Assumption C.
\end{proof}

\subsection{Example}
Let $Q$ be a type A quiver with vertices labelled $1$ to $n$ and edges from $i$ to $i+1$ for $1\leq i\leq n-1$. 
\subsubsection{} Consider first the stability condition \[\theta: \theta^1<\cdots<\theta^n.\] Denote by $\varepsilon_i$
the dimension vector with $1$ in vertex $i$ and zero everywhere else. The $\theta$-semistable representations are at dimension vectors $n\varepsilon_i$ for $n$ a nonnegative integer. 
For $1\leq i\leq n$ a vertex, let $r_i$ be the unique representation of dimension $\varepsilon_i$. Then $$\X(n\varepsilon_i)^{\text{ss}}=\left(r_i^{\oplus n}\right)/ GL(n),$$ so we have that $$S_i:=\bigoplus_{n\geq 0} K_0\left(\X(n\varepsilon_i)^{\text{ss}}\right)\cong\bigoplus_{n\geq 0}K_0(BGL(n)).$$
As a corollary of Theorem \ref{wallcrossing}, we obtain that:
\begin{cor}\label{wccor2} $\text{KHA}(Q, 0)$ is generated by the $\varepsilon_i$-graded pieces with $1\leq i\leq n$ dimensional pieces under the multiplication map: 
$$\text{KHA}(Q, 0)\cong \bigotimes_{i=1}^n S_i.$$ 
\end{cor}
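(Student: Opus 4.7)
The plan is to apply the wall-crossing theorem (Theorem \ref{wallcrossing}(a)) directly. Since $(Q,0)$ satisfies Assumption C, the theorem yields an isomorphism of vector spaces
$$\text{KHA}(Q,0) \xrightarrow{\sim} \bigotimes_{\mu \in \mathbb{Q}} \text{KHA}(Q,0)_\mu,$$
so the task reduces to identifying the slopes $\mu$ for which $\text{KHA}(Q,0)_\mu$ is nonzero and computing these pieces.

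First I would classify $\theta$-semistable representations of the linear $A_n$ quiver with $\theta^1 < \cdots < \theta^n$. Suppose $V$ is $\theta$-semistable of slope $\mu$, and let $i_0 \leq j_0$ be the smallest and largest indices with $V_{i_0}, V_{j_0} \neq 0$. Since $V_{i_0-1} = 0$ by minimality, the representation $r_{i_0}$ arises as a simple quotient of $V$, forcing $\theta^{i_0} \geq \mu$. Since $V_{j_0+1} = 0$ by maximality, any nonzero vector of $V_{j_0}$ generates a subrepresentation isomorphic to $r_{j_0}$, forcing $\theta^{j_0} \leq \mu$. The combined inequalities $\theta^{i_0} \geq \mu \geq \theta^{j_0}$ together with $\theta^1 < \cdots < \theta^n$ force $i_0 = j_0$, so $V$ is concentrated at a single vertex. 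Conversely, since $Q$ has no loops we have $R(n\varepsilon_i) = 0$, so every representation of dimension $n\varepsilon_i$ is trivially $\theta$-semistable of slope $\theta^i$ and $\X(n\varepsilon_i)^{\text{ss}} = B\GL(n)$.

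Substituting into the wall-crossing isomorphism, the only nonzero slope components are $\mu = \theta^i$ for $1 \leq i \leq n$, with
$$\text{KHA}(Q,0)_{\theta^i} = \bigoplus_{n \geq 0} K_0(B\GL(n)) = S_i,$$
and the desired isomorphism follows. The ordered tensor product in Theorem \ref{wallcrossing} is taken by descending slopes, which for our ordering would give $S_n \otimes \cdots \otimes S_1$, but as an isomorphism of vector spaces the order is immaterial. The generation statement is a direct consequence of reading off the multiplication under the wall-crossing isomorphism. I do not expect a serious obstacle here: the semistability classification is standard Harder--Narasimhan analysis for type $A$ quivers, and the rest is a direct invocation of Theorem \ref{wallcrossing}(a) together with the trivial verification of Assumption C for $(Q,0)$.
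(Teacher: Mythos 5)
Your proposal is correct and follows essentially the same route as the paper: classify the $\theta$-semistable representations for $\theta^1<\cdots<\theta^n$ (concentration at a single vertex, so $\X(m\varepsilon_i)^{\text{ss}}=BGL(m)$ and the slope pieces are the $S_i$) and then invoke Theorem \ref{wallcrossing}(a), using that $(Q,0)$ satisfies Assumption C. The only difference is that you spell out the Harder--Narasimhan-type argument for the semistability classification, which the paper simply asserts, and your argument for it is correct under the paper's convention that subrepresentations have slope $\leq$ that of $V$.
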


\subsubsection{}
Choose next the stability condition \[\theta': \theta^1>\cdots>\theta^n.\] The semistable representations are at the multiples of the roots $r_1,\cdots, r_N$ of the Lie algebra associated to $Q$. Consider an algebra $S_i\cong \bigoplus_{n\geq 0}K_0(BGL(n))$ for any $1\leq i\leq N$. An analysis as above and Theorem \ref{wallcrossing} imply that:
\begin{cor}\label{wccor3}
The $\text{KHA}(Q,0)$ is generated by the $r_i$-graded pieces with $1\leq i\leq N$ under the multiplication map: $$\text{KHA}(Q,0)\cong \bigotimes_{i=1}^N S_i.$$ 
\end{cor}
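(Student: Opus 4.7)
The plan is to mirror the argument of Corollary \ref{wccor2}, with the simple roots replaced by the full list of positive roots $r_1,\ldots,r_N$ of the type $A_n$ root system, where $N = n(n+1)/2$. The main input is again the wall-crossing Theorem \ref{wallcrossing}, applicable since $W = 0$ trivially satisfies Assumption C.

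First I would classify the $\theta'$-stable objects. By Gabriel's theorem, the indecomposable representations of the type $A_n$ quiver with orientation $1 \to 2 \to \cdots \to n$ are bricks $V_r$ of dimension vector $r = \varepsilon_i + \varepsilon_{i+1} + \cdots + \varepsilon_j$ for each positive root $r$. The proper nonzero subrepresentations of $V_r$ are precisely $V_{[k,j]}$ for $i < k \leq j$, and the inequality
$$\frac{\theta^k + \cdots + \theta^j}{j-k+1} < \frac{\theta^i + \cdots + \theta^j}{j-i+1}$$
for $k > i$ follows from $\theta^1 > \cdots > \theta^n$, so every $V_r$ is $\theta'$-stable. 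Choosing $\theta'$ generically inside its chamber, the slopes $\tau(r_1),\ldots,\tau(r_N)$ are pairwise distinct, and hence every $\theta'$-semistable representation is isomorphic to $V_{r_i}^{\oplus m}$ for some $i$ and $m \geq 0$; in particular $\X(d)^{\text{ss}}$ is empty unless $d$ is a nonnegative multiple of some $r_i$.

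Second, since each $V_{r_i}$ is a brick, $V_{r_i}^{\oplus m}$ has automorphism group $GL(m)$, so $\X(m r_i)^{\text{ss}} \cong BGL(m)$ for every $m \geq 0$ and
$$\bigoplus_{m \geq 0} K_0\left(\X(m r_i)^{\text{ss}}\right) \cong \bigoplus_{m \geq 0} K_0(BGL(m)) = S_i.$$
Applying Theorem \ref{wallcrossing}, and noting that only the slopes $\tau(r_1),\ldots,\tau(r_N)$ carry nontrivial graded pieces, I obtain the ordered tensor product
$$\text{KHA}(Q,0) \cong \bigotimes_{i=1}^N \text{KHA}(Q,0)_{\tau(r_i)} \cong \bigotimes_{i=1}^N S_i,$$
taken in descending order of slope.

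The main subtlety is the genericity of $\theta'$: on a wall where two roots share a slope $\mu$, the slope-$\mu$ category need not decompose as a tensor product over those roots, because $\operatorname{Ext}^1$ between distinct indecomposables of slope $\mu$ can be nonzero. Picking $\theta'$ in the interior of the open chamber $\theta^1 > \cdots > \theta^n$ so that the slopes of distinct positive roots are all distinct avoids this; the statement of the corollary should be read for such generic $\theta'$.
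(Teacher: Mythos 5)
Your argument is correct and follows essentially the same route as the paper: classify the $\theta'$-stable objects via Gabriel's theorem as the indecomposables $V_{r_i}$, identify $\X(mr_i)^{\text{ss}}\cong BGL(m)$ using the brick property, and apply Theorem \ref{wallcrossing} (Assumption C being automatic for $W=0$). Your genericity caveat — choosing $\theta'$ inside the chamber so that the slopes $\tau(r_1),\dots,\tau(r_N)$ are pairwise distinct — is a worthwhile precision that the paper leaves implicit, since otherwise a single slope could carry several roots and the slope-$\mu$ factor would not immediately split as a tensor product of the corresponding $S_i$.
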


The analogues of Corollaries \ref{wccor2} and \ref{wccor3} for CoHA were proved by Rimanyi \cite{r}. The case of $A_2$ in cohomology has been treated by Kontsevich--Soibelman \cite[Section 2.8]{ks}.

\end{document}